\documentclass[smallcondensed, numbook]{svjour3}
\usepackage{amsmath,amssymb,mathrsfs}
\usepackage{mathtools}
\usepackage{bm}
\usepackage{multirow}   
\usepackage{booktabs}    
\usepackage{makecell}
\usepackage{verbatim}
\usepackage{float}
\usepackage{siunitx}
\usepackage{graphicx}
\usepackage[caption=false]{subfig}
\usepackage{url}

\newtheorem{thm}{Theorem}[section]

\renewcommand{\tablename}{Table}
\renewcommand{\thetable}{\arabic{table}}
\renewcommand{\figurename}{Figure}
\renewcommand{\thefigure}{\arabic{figure}}
\numberwithin{equation}{section}
\usepackage{etoolbox}
\makeatletter
\renewcommand{\thetable}{\thesection.\arabic{table}}
\renewcommand{\thefigure}{\thesection.\arabic{figure}}
\pretocmd{\section}{\setcounter{table}{0}\setcounter{figure}{0}}{}{}
\makeatother

\begin{document}

\title{Rigorous analysis of the time-splitting methods for the semiclassical Dirac equation}

\titlerunning{Rigorous Analysis of the time-splitting methods for the semiclassical Dirac equation}

\author{He Wang \and Jia Yin}

\institute{%
\at School of Mathematical Sciences, Fudan University, Shanghai 200433, China \\
\email{hwang24@m.fudan.edu.cn (He Wang),\, jiayin@fudan.edu.cn (Jia Yin)}
}

\date{}

\maketitle

\begin{abstract}
We provide rigorous error analysis of the mass-preserving time-splitting methods for solving the semiclassical Dirac equation. The scaled Planck constant $\epsilon$ in the equation gives rise to rapid oscillations in both space and time when $0<\epsilon\ll 1$ with wavelengths of order $O(\epsilon)$.
Rigorous error estimates reveal the precise dependence of the approximation errors on the time step $\tau$, the spatial mesh size $h$, and the parameter $\epsilon$. Specifically, the temporal error scales as $O\left(\tau/\epsilon^2\right)$ for the first-order splitting $S_1$ and as $O\left(\tau^2/\epsilon^3\right)$ for the second-order splitting $S_2$, while the spatial error scales as $O(h^m/\epsilon^m)$ for both methods, where $m$ is related to the regularity of the solution. In addition, we obtain error bounds for key physical observables, including the total probability density $\rho$ and the current density $\mathbf{J}$. Compared with finite difference time domain
(FDTD) methods, time-splitting approaches exhibit spectral accuracy in space and retain a relatively low computational cost. Furthermore, we demonstrate that higher accuracy can be achieved by employing the fourth-order compact time-splitting ($S_\text{4c}$) method. Numerical experiments are conducted to verify the reliability of the error estimates.

\keywords{the semiclassical Dirac equation \and time-splitting methods \and spectral methods \and $\epsilon$-scalability}
\end{abstract}

\section{Introduction}

\noindent
The Dirac equation, formulated by Paul Dirac in 1928, describes the relativistic quantum dynamics of spin-1/2 particles. The equation not only accounts for the fine structure of the hydrogen spectrum, but also successfully predicted the existence of antimatter, thus having a profound impact on particle physics \cite{dirac1928quantum}. In condensed matter systems, the massless Dirac equation governs the low‑energy electronic behavior of graphene, explaining its unusual linear dispersion and the associated quantum Hall effect  \cite{castro2009electronic3,novoselov2004electric2}. Furthermore, the Dirac equation provides the fundamental framework for understanding the topological surface states and associated dissipationless edge currents in topological insulators and topological semimetals \cite{Hasan2010Topological4,Young2012Dirac5}. In the field of quantum simulation, it has been employed to model physical phenomena in curved spacetime \cite{boada2011dirac7}. In strong‑field physics, the time‑dependent Dirac equation is essential for simulating highly relativistic electron dynamics under ultra‑intense laser fields (e.g., attosecond lasers), including highly non-linear quantum electrodynamical processes such as electron-positron pair production \cite{DiPiazza2012Strong6}. Among its various regimes, the Dirac equation in the semiclassical limit is of particular interest due to its distinctive physical and mathematical features, as it describes the transition from quantum to classical dynamics.

\noindent
According to the framework established in \cite{bao2017numerical,bao2019fourth}, the $d$-dimensional ($d = 1,2,3$) semiclassical Dirac equation admits the following formulation:

\begin{equation}\label{def-Dirac-complete}
    i \epsilon \partial_t \textbf{$\textbf{$\Psi$}$} = \left[ \left( -i \epsilon \sum_{j=1}^d \alpha_j \partial_j + \beta \right) +  \left( V(t, \textbf{x}) I_4 - \sum_{j=1}^dA_j(t, \textbf{x}) \alpha_j
    \right) \right] \textbf{$\Psi$},
\end{equation}
where $i = \sqrt{-1}$ is the imaginary unit, $\textbf{x} = (x_1, \cdots,x_d)$ represents the spatial coordinate, $t$ is time, $I_4$ denotes the $4\times 4$ identity matrix and the 4 $\times$ 4 matrices $\alpha_j$ ($j = 1$, $\cdots$, $d$), $\beta$ are defined as follows:

\begin{equation}
    \alpha_j = \begin{bmatrix}
        \textbf{0} & \sigma_j \\
        \sigma_j & \textbf{0}
    \end{bmatrix}, \qquad j = 1, \cdots, d,\qquad \beta = \begin{bmatrix}
        I_2 & \textbf{0} \\
        \textbf{0} & - I_2
    \end{bmatrix}.
\end{equation}
In the above expression, $I_2$ denotes the 2 $\times$ 2 identity matrix and $\sigma_j$  ($j = 1$, $\cdots$, $d$) represent the Pauli matrices which are given as

\begin{equation}
        \sigma_1 = \begin{bmatrix}
        0 & 1 \\
        1 & 0
    \end{bmatrix}, \qquad \sigma_2 = \begin{bmatrix}
        0 & -i \\
        i & 0
    \end{bmatrix}, \qquad \sigma_3 = \begin{bmatrix}
        1 & 0 \\
        0 & -1
    \end{bmatrix}.
\end{equation}

\noindent
Moreover, in the equation \eqref{def-Dirac-complete}, $\textbf{$\Psi$}\in\mathbb{C}^4$ represents the four-component spinor wave function, and $V, A_j \in \mathbb{R}$ ($j = 1$, $\cdots$, $d$) represent the electric and magnetic potentials, respectively. The parameter $\epsilon \in (0,1)$ is the scaled Planck constant, which influences the scale of oscillations in the solution. 

\noindent
In the cases $d=1,2$, as given in \cite{bao2016uniformly}, we can reduce the Dirac equation \eqref{def-Dirac-complete}  to the equation of the two-component wave function $\Phi$ :

\begin{equation}\label{def-Dirac-simple}   
    i \epsilon \partial_t \textbf{$\Phi$} = \left[ \left( -i \epsilon \sum_{j=1}^d \sigma_j \partial_j + \sigma_3 \right) +  \left( V(t, \textbf{x}) I_2 - \sum_{j=1}^dA_j(t, \textbf{x}) \sigma_j
    \right) \right] \textbf{$\Phi$},
\end{equation}

\noindent
where $\Phi = \left( \Psi_1, \Psi_4 \right)^T$ or $\left( \Psi_2, \Psi_3 \right)^T \in \mathbb{C}^2$. For simplicity, we only deal with the one- or two-dimensional (1D or 2D) semiclassical Dirac equation with the reduced form~\eqref{def-Dirac-simple} in the following discussion.

\noindent
In the classical regime where $\epsilon = 1$, there have been a variety of efficient numerical methods for the Dirac equation, such as the finite difference time domain (FDTD) methods \cite{classical-solution-fdtd1,classical-solution-fdtd2} and exponential wave integrator Fourier pseudospectral (EWI-FP) method \cite{bao2017numerical}. However, in the semiclassical regime, these methods may suffer from less efficiency due to the high frequency oscillation of the solution. Specificially, their accuracy typically depends on the discretization parameters resolving the small wavelength associated with $\epsilon$. In the semiclassical regime, i.e. $0 < \epsilon \ll 1$, the solution oscillates with the wavelength $O( \epsilon)$  in time and space. This high-frequency oscillation requires fine mesh size $h = O(\epsilon)$ and $\tau = O(\epsilon)$ , which makes direct numerical resolution prohibitively expensive. This motivates the study of numerical methods with improved $\epsilon$-scalability. Several efficient numerical methods for the semiclassical Dirac equation including LFFD, SIFD1 and SIFD2 \cite{fdtd_dirac}. Most FDTD schemes are conditionally stable, requiring a coupling between the spatial and temporal mesh sizes. In contrast, the CNFD method is unconditionally stable but entailing a significant computational cost as it requires solving a linear system at each time step.
To address the above difficulties, we apply the time-splitting methods to the semiclassical Dirac equation, which is used to solve the Schr\"{o}dinger equation in \cite{sp_SchrodingerEquation,sp-semiregime-SchrodingerEquation,sp-semilag-SchrodingerEquation}, the Stokes equation \cite{sp-Stokes-equation} and the Dirac equation in different regimes \cite{sp-dirac-uniform-error-bounds,bao2019fourth,yin_sp_time-dependent}. Specifically, we consider the first-order ($S_1$), the second-order ($S_2$) and the compact fourth-order ($S_{4c}$) splitting methods. Among these methods, $S_1$ and $S_2$ are mass-preserving, while $S_\text{4c}$ also conserves mass in 1D. Compared to FDTD schemes, the time-splitting methods are highly efficient and exhibit higher-order accuracy.

\noindent
The rest of this paper is structured as follows: In Section \ref{section2}, we introduce the time-splitting algorithms, including $S_1$, $S_2$ and $S_\text{4c}$, for solving the semiclassical Dirac equation and show the mass conservation property. In Section \ref{section3}, we give the rigorous error estimates of the time-splitting methods under some regularity conditions and prove the result for $S_1$. We conduct numerical experiments in Section \ref{section4} to verify the error estimates established. Finally, we conclude in Section \ref{section5}.

\begin{remark}
Throughout the paper, we employ the standard Sobolev spaces and their associated norms. Moreover, the notation
\[
A \lesssim B,
\]
indicates that there exists a generic constant $C>0$, independent of $\epsilon$, $\tau$, and $h$, such that
\[
|A| \leq C B.
\]

\end{remark}

\section{Time-splitting Spectral Methods} \label{section2}

\noindent
In this section, we apply the time-splitting methods to the semiclassical Dirac equation and prove the property of mass conservation. To employ spectral methods with Fourier basis which enables the efficient use of the Fast Fourier Transform (FFT), we impose periodic boundary conditions on the Dirac equation \eqref{def_Dirac_simplified}. This assumption is reasonable since the wave function usually vanishes on the boundary of a sufficiently large domain.  
For the sake of notational simplicity and clarity of presentation, we restrict our discussion to the one-dimensional case of \eqref{def-Dirac-simple}, for which the numerical schemes and the corresponding analysis are presented in detail. The extension to \eqref{def-Dirac-complete} and higher-dimensional settings is standard and straightforward, and all the results established in this work remain valid without essential modifications.

\noindent
Moreover, in order to avoid unnecessary technicalities, we confine ourselves to the case where the potential functions are independent of time. The more general situation involving explicitly time-dependent potentials can be handled by means of the time-ordering technique; we refer the reader to \cite{yin_sp_time-dependent} for a detailed treatment.

\subsection{Operator splitting for the Dirac equation}
\noindent
For the Dirac equation in 1D $\left( d = 1 \right)$, we consider the following initial-boundary value problem:

\begin{align}
i \epsilon \partial_t \Phi(t,\mathbf{x}) 
&= \left[ \left( -i \epsilon \sigma_1 \partial_x + \sigma_3 \right) 
+ \left( V(x) I_2 - A_1(x) \sigma_1 \right) \right] 
\Phi(t,x) \notag \\
&\qquad (t,x) \in \mathbb{R}^+ \times \Omega, \label{def_Dirac_simplified}\\
\Phi(t, a) &= \Phi(t, b), \quad 
\partial_{x}\Phi(t, a) = \partial_{x}\Phi(t, b), \quad t > 0, \\
\Phi(0,x) &= \Phi_0(x), \quad x \in \bar{\Omega}.
\end{align}

\noindent
where $\Phi, \Phi_0 \in \mathbb{C}^2, x \in \Omega := (a,b), t \in \left[0,T_\text{end}\right]$. To simplify the notation, we define:

\begin{align}
    & T = -\sigma_1 \partial_x - \frac{i}{\epsilon} \sigma_3, \label{def_T}\\
    & W = -\frac{i}{\epsilon} V(x) I_2 + \frac{i}{\epsilon} A_1 (x) \sigma_1. \label{def_W}
\end{align}

\noindent
Then the Dirac equation can be written in the compact form
\begin{equation}\label{def_dirac_operato}
    \partial_t \Phi = (T + W)\Phi.
\end{equation}
Formally, its solution is given by
\begin{equation}
    \Phi(t,x)=e^{t(T+W)}\Phi(0,x).
\end{equation}
Applying the operator splitting technique, the evolution operator can be approximated by
\begin{equation}
    \Phi(t,x)\approx \prod_{j=1}^{n} e^{a_j t T} e^{b_j t W}\Phi(0,x),
\end{equation}
where $n\geq 1$, and the coefficients
\[
a_j,b_j\in\mathbb{R}, \qquad j=1,2,\ldots,n,
\]
are parameters to be determined. The coefficients are chosen such that the resulting splitting scheme attains a prescribed order of accuracy with respect to the time step.

\noindent

\subsection{The full-discretization of the splitting methods}

\noindent
Let $h = \frac{b - a}{M}$ denote the mesh size with $M$ being an even integer and $\tau>0$ denote the time step. We denote the grid points and the time steps as:

\begin{align}
x_j = a + j h,\quad j = 0,1,\dots,M-1; \quad
t_n = n \tau,\quad n = 0,1,2,\dots
\end{align}

\noindent
Let $\Phi_{j}^{ n} \approx \Phi(t_n, x_j)$ be the numerical solution at the grid point $x_j$ and the time step $t_n$, $V_j := V( x_j)$ and $A_{1,j} := A_1(x_j)$ denote the electric and magnetic potentials, respectively.

\noindent
The Fourier interpolation function of $f$ based on the given grid points can be formulated as follows:

\begin{align}
      & f_I(x) = \sum_{l = -\frac{M}{2}}^{\frac{M}{2}-1}\hat{f}_l e^{i \mu_{l}(x - a)}, \quad\hat{f}_l = \frac{1}{M} \sum_{j = 0}^{M - 1} f(x_j) e^{-i \mu_l(x_j - a)}, \label{fourier interpolation function}\\
      & \mu_l = \frac{2 \pi l}{b - a},\quad l = - \frac{M}{2}, \cdots, \frac{M}{2} - 1. \label{def_mu_l}
\end{align}
 
\noindent
In this paper, we define the $l^2$-norm for a function as:

\begin{equation} \label{l^2-norm}
    \left\| \Phi(t, \cdot) \right\|_{L^2} := \sqrt{\int_{\Omega} \left| \Phi(t,x)^2 \right| dx}.
\end{equation}

\subsubsection{The first-order time-splitting method ($S_1$) }
\noindent
In the first-order splitting, from time $t = t_n$ to time $t = t_{n+1}$, we solve the Dirac equation in two sub-steps. The approximate solution of the Dirac equation \eqref{def_dirac_operato} can be computed by the following iterative formula: 

\begin{align}
    & \Phi(t_{n+1}, \cdot) = e^{\tau (W +T) } \Phi(t_{n}, \cdot) \approx e^{\tau  W } e^{\tau  T} \Phi(t_{n}, \cdot), \quad n \geq 0.
\end{align}

\noindent
Specifically, in the first sub-step, we compute $e^{\tau T}\Phi(t_n, \cdot)$ in the phase space using the Fourier spectral method. After that, in the second sub-step, we integrate the resulting system exactly in time for the $W$ operator. More precisely, we denote by $\Phi^n=\left(\Phi_0^n,\Phi_1^n,\cdots,\Phi_{M-1}^n\right)$ the numerical solution at the temporal grid point \(t=t_n\).
The proposed fully discrete scheme is formulated as follows: for each \(n\geq 0\),

\begin{align}
     &\Phi^{n, *}_j = \sum_{l = -\frac{M}{2}}^{\frac{M}{2}-1} e^{\tau \left( -i \sigma_1 \mu_l - \frac{i}{\epsilon} \sigma_3 \right)} \hat{\Phi}^n_l e^{i \mu_l (x_{j} - a)},   \label{sp1_1}\\
     &\Phi^{n+1}_j = e^{\tau  \left[ -\frac{i}{\epsilon} V(  x_j) I_2 + \frac{i}{\epsilon} A_1 ( x_j) \sigma_1 \right] } \Phi^{n, *}_j,   \label{sp1_2}   
\end{align}

\noindent
where $\mu_l$ is defined as the equality \eqref{def_mu_l}, the initial condition is given by $ \Phi^0=\left( \Phi^0_0,\Phi^0_1,\cdots,\Phi^0_{M-1} \right)=\left( \Phi^0(x_0),\Phi^0(x_1),\cdots,\Phi^0(x_{M-1}) \right)$ and $\hat{\Phi}^n_j$ gives the Fourier coefficients of $\Phi_j^n$

\begin{equation} \label{sp1_3}
    \hat{\Phi}^n_j = \frac{1}{M} \sum_{j = 0}^{M-1} \Phi^n_j e^{-i \mu_l(x_j - a)}.
\end{equation}
Obviously, the main computational cost of this algorithm comes from FFT, which scales as $O(M\log M)$.

\subsubsection{The second-order time-splitting method ($S_2$)}

\noindent
To improve the accuracy of the approximate solution, the $S_2$ algorithm is designed to narrow the gap between the exact solution operator and the splitting operator \cite{strang-splitting-1968}. Specifically, from time $t = t_n$ to time $t = t_{n + 1}$, we split the solution operator as follows:

\begin{align}
    & \Phi(t_{n+1}, \cdot) = e^{\tau (W +T) } \Phi(t_{n}, \cdot) \approx e^{\frac{\tau}{2}  W } e^{\tau  T} e^{\frac{\tau}{2}  W } \Phi(t_{n}, \cdot).
\end{align}
Similar to $S_1$, the approximate solution can be obtained by the following steps for $n\geq 0$:

\begin{align}
    & \Phi^{n, *}_j = e^{\frac{1}{2} \tau  \left[ -\frac{i}{\epsilon} V(  x) I_2 + \frac{i}{\epsilon} A_1 (x) \sigma_1 \right] } \Phi^n_j,   \label{sp2_1} \\ 
    & \Phi ^{n,**}_j = \sum_{l = -\frac{M}{2}}^{\frac{M}{2}-1} e^{\tau \left( -i \sigma_1 \mu_l - \frac{i}{\epsilon} \sigma_3 \right)} \hat{\Phi}^{n, *}_l e^{i \mu_l(x_j - a)},   \label{sp2_2} \\
    & \Phi_j ^{n + 1} = e^{\frac{1}{2} \tau  \left[ -\frac{i}{\epsilon} V(  x) I_2 + \frac{i}{\epsilon} A_1 (x) \sigma_1 \right] } \Phi^{n, **}_j,   \label{sp2_3}
\end{align}
\noindent
where the Fourier coefficients of $\Phi^{n, *}_j$ are defined as 

\begin{equation} \label{sp2_4}
    \hat{\Phi}^{n, *}_l = \frac{1}{M} \sum_{j = 0}^{M-1} \Phi^{n, *}_j e^{-i \mu_l (x_j - a)}.
\end{equation}

\noindent
We remark here that in practical computation, $S_2$ can be more effectively implementated through the following observation:

\begin{equation} \label{consolidation_S2}
\begin{aligned}
    e^{T_\text{end}(T + W)}  &\approx e^{\frac{\tau}{2} W} e^{\tau T} e^{\frac{\tau}{2} W} \cdots e^{\frac{\tau}{2} W} e^{\tau T} e^{\frac{\tau}{2} W} \\
    & = e^{\frac{\tau}{2} W} e^{\tau T} e^{\tau W} e^{\tau T} e^{\tau W} \cdots e^{\tau T} e^{\tau W} e^{\frac{\tau}{2} W}.
\end{aligned}
\end{equation}

\noindent
It is apparent that under this framework, the computational cost for $S_2$ is nearly identical to that of $S_1$, yet it cab yield a substantial reduction in the approximation error.

\subsubsection{The compact fourth-order time-splitting method ($S_\text{4c}$)}

\noindent
We can further reduce the error by enhancing the accuracy of the splitting scheme. In addition, to avoid the negative time steps~\cite{bao2019fourth,chin2001fourth}, we use $S_\text{4c}$ to obtain the approximate solution \cite{bao2019fourth} by using the following formula for $n\geq 0$: 

\begin{equation} \label{def_s4c}
    \Phi^{n + 1} = e^{\frac{1}{6} \tau W } e^{\frac{\tau}{2} T} e^{\frac{2}{3} \tau W + \frac{\tau^3}{72} \left[ W, \left[ T, W \right] \right]} e^{\frac{\tau}{2} T} e^{\frac{1}{6} \tau W} \Phi^n.
\end{equation}

\noindent
In this scheme, we need to compute an additional operator 
$e^{\frac{2}{3} \tau W + \frac{\tau^3}{72} \left[ W, \left[T, W \right]\right]}. $
Similar to the computation in \cite{bao2019fourth}, we can find out that this operator contains partial derivatives $\partial_{x_1}, \cdots, \partial_{x_d}$ in 2D and 3D. As a result, in higher dimensions, to evaluate this operator and maintain fourth-order accuracy, we use the fourth-order Runge–Kutta method (RK4) in time and spectral discretization in space. 
We remark here that a simplification similar to \eqref{consolidation_S2} can be applied to $S_\text{4c}$. For the sake of brevity, the detailed algorithm for $S_\text{4c}$ is omitted here.

\subsection{Mass Conservation}
Let $X_M=
\Bigl\{
U=(U_0,U_1,\cdots,U_M)
\ \big|\
U_j\in\mathbb{C}^2,\ j=0,1,\cdots,M,\ U_0=U_M
\Bigr\}$, and adopt the periodic extension $U_{-1}=U_{M-1},
U_{M+1}=U_1$, whenever such indices are involved. On the space $X_M$, the discrete $l_2$-norm is defined by
\begin{equation}
\|U\|_{l_2}
=
\sqrt{h\sum_{j=0}^{M-1}|U_j|^2},
\qquad
U\in X_M.
\end{equation}

\noindent
 We establish the discrete mass conservation property for the time-splitting scheme $S_1$. 

\begin{lemma} \label{lemma_sp1_mass_conservation}
The time-splitting scheme $S_1$ \eqref{sp1_1} and \eqref{sp1_2} conserves the discretized mass under any mesh size $h$ and temporal step size $\tau$, i.e. 
    
\begin{align}
    \left\| \Phi^{n + 1} \right\|_{l_2}^2 &= h \sum_{j = 0}^{M-1} \left| \Phi^{n + 1}_j \right|^2 \equiv h \sum_{j = 0}^{M-1} \left| \Phi^{0}_j \right|^2 \notag\\ &= h \sum_{j = 0}^{M-1} \left| \Phi_0 (x_j) \right|^2 = \left\| \Phi^0 \right\|_{l_2}^2 .
\end{align}

\end{lemma}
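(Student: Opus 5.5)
The plan is to show that each of the two sub-steps \eqref{sp1_1} and \eqref{sp1_2} preserves the discrete $l_2$-norm. Conservation of $\|\Phi^n\|_{l_2}$ for all $n$ then follows by induction on $n$. The last equality in the statement is immediate from the initial condition $\Phi^0_j=\Phi_0(x_j)$.

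\emph{Step 1: the $W$-substep.} For each fixed grid point $x_j$, the matrix
\[
B_j=-\frac{i}{\epsilon}V(x_j)I_2+\frac{i}{\epsilon}A_1(x_j)\sigma_1
\]
is skew-Hermitian. This is because $V$ and $A_1$ are real and $I_2$, $\sigma_1$ are Hermitian, so $B_j^*=-B_j$. Hence $e^{\tau B_j}$ is unitary and $|\Phi^{n+1}_j|=|\Phi^{n,*}_j|$ pointwise for every $j$. Summing with weight $h$ gives $\|\Phi^{n+1}\|_{l_2}=\|\Phi^{n,*}\|_{l_2}$.

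\emph{Step 2: the $T$-substep.} Here I use the discrete Parseval identity for the Fourier interpolation \eqref{fourier interpolation function}. For $U\in X_M$ with coefficients $\hat U_l$ defined as in \eqref{sp1_3}, the orthogonality relation
\[
\frac1M\sum_{j=0}^{M-1}e^{i(\mu_l-\mu_k)(x_j-a)}=\delta_{lk}, \qquad l,k\in\{-\tfrac M2,\dots,\tfrac M2-1\},
\]
holds. It follows from summing a geometric series, using $\mu_l-\mu_k=2\pi(l-k)/(b-a)$ with $|l-k|<M$. From it one obtains
\[
h\sum_{j}|U_j|^2=(b-a)\sum_l|\hat U_l|^2.
\]
The map $\Phi^n\mapsto\Phi^{n,*}$ is, in frequency space, multiplication of $\hat\Phi^n_l$ by the matrix $e^{\tau(-i\sigma_1\mu_l-\frac{i}{\epsilon}\sigma_3)}$. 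By the orthogonality relation, the discrete Fourier coefficients of $\Phi^{n,*}$ are exactly these products. The exponent $-i(\mu_l\sigma_1+\epsilon^{-1}\sigma_3)$ is skew-Hermitian, since $\mu_l$ and $\epsilon$ are real and $\sigma_1,\sigma_3$ are Hermitian. The exponential is therefore unitary, so $|\widehat{\Phi^{n,*}}_l|=|\hat\Phi^n_l|$ for each $l$. Applying Parseval on both sides yields $\|\Phi^{n,*}\|_{l_2}=\|\Phi^n\|_{l_2}$.

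Combining the two steps gives $\|\Phi^{n+1}\|_{l_2}=\|\Phi^n\|_{l_2}$ for every $h$ and $\tau$, and induction gives the claim. The only point needing care is Step 2. Specifically, one must check that the coefficient index range and the normalisation in \eqref{sp1_3} match \eqref{fourier interpolation function}, so that the forward and inverse discrete transforms are exact inverses on $X_M$. One must also keep track of the constant $(b-a)=Mh$ correctly in Parseval. Everything else is a direct unitarity argument.
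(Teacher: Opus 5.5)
Your proof is correct and follows the same route as the paper: unitarity of the $W$-exponential at each grid point, then discrete Fourier orthogonality (Parseval) combined with unitarity of the symbol $e^{\tau(-i\sigma_1\mu_l-\frac{i}{\epsilon}\sigma_3)}$ for the $T$-substep, and finally induction on $n$. You also justify the unitarity of both exponentials via skew-Hermiticity of the exponents, which the paper uses without comment.
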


\begin{proof}
Using the unitarity of $e^{\tau\left(
-\frac{i}{\epsilon}V(x_j)I_2
+\frac{i}{\epsilon}A_1(x_j)\sigma_1
\right)}$ , we obtain
\begin{equation*}
\begin{aligned}
\frac{1}{h}\|\Phi^{n+1}\|_{l_2}^2
&=
\sum_{j=0}^{M-1} |\Phi_j^{n+1}|^2 =
\sum_{j=0}^{M-1}
\left|
e^{\tau\left(
-\frac{i}{\epsilon}V(x_j)I_2
+\frac{i}{\epsilon}A_1(x_j)\sigma_1
\right)}
\Phi_j^{n,*}
\right|^2 \\
&=
\sum_{j=0}^{M-1} |\Phi_j^{n,*}|^2 .
\end{aligned}
\end{equation*}

\noindent
On the other hand, by the orthogonality of the discrete Fourier basis,
\begin{equation*}
\begin{aligned}
\sum_{j=0}^{M-1} |\Phi_j^{n,*}|^2
&=
\sum_{j=0}^{M-1}
\left|
\sum_{l=-\frac{M}{2}}^{\frac{M}{2}-1}
e^{\tau\left(
-i\sigma_1\mu_l-\frac{i}{\epsilon}\sigma_3
\right)}
\hat{\Phi}_l^n
e^{i\mu_l(x_j-a)}
\right|^2 \\
&=
M
\sum_{l=-\frac{M}{2}}^{\frac{M}{2}-1}
\left|
e^{\tau\left(
-i\sigma_1\mu_l-\frac{i}{\epsilon}\sigma_3
\right)}
\hat{\Phi}_l^n
\right|^2 \\
&= M \sum_{l=-\frac{M}{2}}^{\frac{M}{2}-1}
|\hat{\Phi}_l^n|^2 = M \sum_{l = -\frac{M}{2}}^{\frac{M}{2}-1} \left | \frac{1}{M} \sum_{j = 0}^{M-1} \Phi^n_j e^{-i \mu_l(x_j - a)} \right| ^2\\ &= \sum_{l = -\frac{M}{2}}^{\frac{M}{2}-1} \left | \sum_{j = 0}^{M-1} \Phi^n_j e^{-i \mu_l(x_j - a)} \right| ^2 = \sum_{j = 0}^{M-1} \left| \Phi^n_j \right|^2 = \frac{1}{h} \left \| \Phi^{n} \right\|_{l_2} ^2.
\end{aligned}
\end{equation*}
Therefore,
\begin{equation*}
\|\Phi^{n+1}\|_{l_2}
=
\|\Phi^n\|_{l_2}.
\end{equation*}
The desired result follows immediately by induction on $n$.     \hfill $\square$
\end{proof}

\noindent
Similar conservation properties hold for $S_2$, as well as $S_\text{4c}$ in 1D. For brevity, we only give the mass conservation result and omit the proof here.
\begin{lemma}\label{lemma_sp2_mass_conservation}
Both the second-order splitting scheme \(S_2\)
\eqref{sp2_1}--\eqref{sp2_3} and the fourth-order compact
splitting scheme \(S_{4c}\) \eqref{def_s4c} in 1D
conserve the discrete mass, namely,
\begin{equation}
\|\Phi^{n+1}\|_{l_2}
=
\|\Phi^n\|_{l_2}
=
\|\Phi^0\|_{l_2},
\qquad n\geq 0.
\end{equation}
Equivalently,
\begin{align}
\|\Phi^{n+1}\|_{l_2}^2 = h\sum_{j=0}^{M-1} |\Phi_j^{\,n+1}|^2 = h\sum_{j=0}^{M-1} |\Phi_0(x_j)|^2 .
\end{align}
\end{lemma}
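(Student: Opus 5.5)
The plan is to reuse the two facts that drove the proof of Lemma \ref{lemma_sp1_mass_conservation}. Each of $S_2$ and $S_{4c}$ is a composition of sub-steps, and I will show that every sub-step is an isometry of the discrete $l_2$-norm on $X_M$. The first fact is pointwise: for each grid point $x_j$, a potential sub-step multiplies $\Phi_j$ by a $2\times 2$ matrix. The second is spectral: a kinetic sub-step multiplies each discrete Fourier coefficient by a $2\times 2$ matrix. If all of these matrices are unitary, the orthogonality relation $\sum_{j}|\Phi_j|^2 = M\sum_l|\hat\Phi_l|^2$ (the discrete Parseval identity already used in the $S_1$ proof) gives norm preservation step by step. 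Induction on $n$ then yields $\|\Phi^{n+1}\|_{l_2}=\|\Phi^0\|_{l_2}=\bigl(h\sum_j|\Phi_0(x_j)|^2\bigr)^{1/2}$.

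For $S_2$ the argument is direct.
\begin{itemize}
\item In \eqref{sp2_1} and \eqref{sp2_3} the matrix $\frac{\tau}{2}\bigl(-\frac{i}{\epsilon}V(x_j)I_2+\frac{i}{\epsilon}A_1(x_j)\sigma_1\bigr)$ is $i$ times a real multiple of a Hermitian matrix, since $V,A_1$ are real and $\sigma_1$ is Hermitian. Its exponential is therefore unitary, so $\sum_j|\Phi^{n,*}_j|^2=\sum_j|\Phi^n_j|^2$, and likewise for the last step.
\item In \eqref{sp2_2} the matrix $-i\tau(\sigma_1\mu_l+\frac{1}{\epsilon}\sigma_3)$ is skew-Hermitian, because $\mu_l$ is real. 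Its exponential is unitary, and the same Parseval chain as in the $S_1$ proof gives $\sum_j|\Phi^{n,**}_j|^2=\sum_j|\Phi^{n,*}_j|^2$.
\end{itemize}

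For $S_{4c}$ in 1D, the outer factors $e^{\frac{1}{6}\tau W}$ and $e^{\frac{\tau}{2}T}$ are handled exactly as above. The key step, and the main obstacle, is the middle factor $e^{\frac{2}{3}\tau W+\frac{\tau^3}{72}[W,[T,W]]}$. I will compute the double commutator explicitly in 1D. Since $W$ is a multiplication operator, the commutator $[T,W]$ consists of two parts.
\begin{itemize}
\item The part $-\sigma_1\partial_x W + W\sigma_1\partial_x$ produces $-\sigma_1 W'(x)$ plus the term $-(\sigma_1W-W\sigma_1)\partial_x$. Because $W$ involves only $I_2$ and $\sigma_1$, we have $\sigma_1W=W\sigma_1$, so the derivative terms cancel. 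This is exactly why 1D is special.
\item The part $-\frac{i}{\epsilon}[\sigma_3,W]$ is a multiplication by a matrix function.
\end{itemize}
Hence $[T,W]$, and then $[W,[T,W]]$, are pointwise multiplication operators by $2\times2$ matrix functions $G(x)$. I will then check that $\frac{2}{3}\tau W+\frac{\tau^3}{72}G$ is skew-Hermitian at each $x$. Since $W$ is skew-Hermitian and $T$ is skew-adjoint on periodic functions, $[T,W]$ is skew-adjoint as an operator. The commutator of two skew-adjoint operators is again skew-adjoint, so $[W,[T,W]]$ is skew-adjoint. Because it is a pointwise multiplication operator, its matrix symbol $G(x_j)$ is skew-Hermitian at each grid point. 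Alternatively, I would verify this explicitly from $\sigma_3\sigma_1=-\sigma_1\sigma_3=i\sigma_2$, which gives $[\sigma_3,\sigma_1]=2i\sigma_2$ and $[\sigma_1,\sigma_2]=2i\sigma_3$. This leads to $G$ being of the form $\frac{i}{\epsilon^3}c(x)\sigma_3$ with $c$ real, where $c$ is proportional to $A_1^2$.

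With skew-Hermiticity established, the exponential is unitary at each $x_j$, and the middle step preserves $\sum_j|\Phi_j|^2$. The composition of the five isometries is then an isometry, and induction on $n$ concludes the proof for $S_{4c}$ as well.
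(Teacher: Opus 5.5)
Your proposal is correct and follows the route the paper intends: the paper omits this proof as analogous to the $S_1$ case, and you extend that argument (pointwise unitarity of the potential factors, discrete Parseval with unitary symbols for the kinetic factors, then induction on $n$). Your explicit 1D computation $[W,[T,W]] = -\frac{4i}{\epsilon^3}A_1^2\sigma_3$ checks out: it is a skew-Hermitian multiplication operator, so the middle $S_{4c}$ factor is pointwise unitary, which supplies exactly the detail the paper leaves implicit.
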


\section{Error Estimates} \label{section3}

\noindent
For simplicity, in this paper we only show the error estimates of \eqref{def-Dirac-simple} for the case $d = 1$. The generalization to the Dirac equation \eqref{def-Dirac-complete} and higher dimensions will follow directly with few modifications. 

\noindent
To obtain a proper error estimate, our analysis assumes that the exact solution and the electromagnetic potentials satisfy certain regularity conditions:

\begin{equation}
\begin{aligned} 
  (A) \qquad  &\Phi \in C^2 \left( \left[0,T_\text{end}\right]; W^{m,\infty}_p \left(\Omega \right) \right), \quad V, A_1 \in C^m\left(\Omega \right), \quad m \geq 4,
\end{aligned}
\end{equation}
\noindent
where $W^{m,\infty}_p \left(\Omega \right) = \left\{ f | f \in W^{m,\infty} \left( \Omega \right), \partial_x^k f(a) = \partial_x^k f(b), k = 0,\cdots,m - 1 \right\}$, and $m$ can be any integer with $m \geq 4$, which will be related to the numerical accuracy in space.
In practice, the electromagnetic potentials $V$ and $A_1$ may not be inherently periodic, but they nearly attain constant values at the boundaries of $\Omega$. Under this circumstance, they can be reasonably treated as periodic over $\Omega$. Consequently, the wave function $\Phi$ will also be periodic.

\noindent
In addition, the derivatives of the wave function and the electromagnetic potentials should satisfy the following conditions:

\begin{equation}
\begin{aligned}
 (B) \qquad  & \left \| \frac{\partial^{r + k}}{\partial t^{r} \partial x^k} \Phi \right \|_{L^\infty([0,T_{end}]; (L^\infty(\Omega))^2)} \leq \frac{C_{r + k}}{\epsilon^{r + k}}\\
 &\left \| \frac{\partial^{ k}}{\partial x^k} V  \right \|_{L^\infty((L^\infty(\Omega))^2)} \leq D_{k}, \quad\left \| \frac{\partial^{k}}{\partial x^{k}} 
   A_1 \right \|_{L^\infty((L^\infty(\Omega))^2)} \leq D_{k}, \\
   &  \qquad \qquad \qquad0 \leq r \leq 2,\, 0 \leq k \leq m.
\end{aligned}
\end{equation}
\noindent
where the constants $C_{r + k}$ and $ D_{k}$ are independent of $\epsilon$. 

\subsection{The Main Results}
\noindent
Based on the assumptions above, we can get the main results of the error estimates. To simplify the notation, we denote the numerical interpolation function of $\Phi^n$ by $\Phi_I^n$ at time $t = t_n$. Specifically, we have:
\begin{equation}\label{def_numerical_interpolation}
    \Phi^n_I(x) = \sum_{l = -\frac{M}{2}}^{\frac{M}{2}-1}  \hat{\Phi}^n_l e^{i \mu_l(x - a)},
     \quad \hat{\Phi}^n_l = \frac{1}{M} \sum_{j = 0}^{M-1} \Phi^n_j e^{-i \mu_l (x - a)},
\end{equation}
with $\mu_l = \frac{2 \pi l}{b - a}$, $l=-\frac{M}{2}, -\frac{M}{2}+1, \cdots, \frac{M}{2}-1$. Then we can get the following theorems for the error estimates.

\begin{thm} \label{thm-error-fun-sp1}
Let $\Phi(t_n, \cdot)$ be the exact solution of \eqref{def_Dirac_simplified} at time $t = t_n$ and $\Phi_I^{n}(\cdot)$ be the numerical solution given by the $S_1$ method. Under assumptions (A) and (B), and assuming $ 0< \epsilon \leq 1$, $\tau = o(\epsilon)$, $h = o(\epsilon)$, then the following error estimate holds for $S_1$ with $0 \leq n \leq {T_\text{end}}/{\tau}$:

    \begin{align} \label{equ-thm-error-fun-sp1}
        &\left\| \Phi(t_n,\cdot) - \Phi_I^{n}(\cdot) \right\|_{L^2} \leq C_1 T_\text{end} \left( \frac{\tau}{\epsilon^2} \right)  + E_{m,1} \cdot \frac{T_\text{end}}{\tau} \left( \frac{h}{\epsilon(b - a)} \right)^m,
    \end{align}
    \noindent
    where $C_1$ and $E_{m,1}$ are positive constants independent of $\epsilon, \tau$ and $h$. 
    
\end{thm}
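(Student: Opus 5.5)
The plan is the standard Lady Windermere's fan argument: split the error into a local splitting error and a local spatial (interpolation/aliasing) error, propagate both with an $L^2$-stable discrete evolution, and sum over $n\le T_\text{end}/\tau$ steps.

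\textbf{Step 1: the semi-discrete flow is an isometry.} Write $S_\tau := e^{\tau W}e^{\tau T}$ for the semi-discrete $S_1$ flow, and $\mathcal{S}_\tau = I_M\, e^{\tau W} e^{\tau T} P_M$ for the fully discrete one. Here $P_M$ is the Fourier projection and $I_M$ the trigonometric interpolation, so that $\Phi_I^{n+1}=\mathcal{S}_\tau \Phi_I^n$. The operator $T$ is skew-adjoint on periodic $L^2$, since $\sigma_1$ and $\sigma_3$ are Hermitian. The operator $W$ is $\tfrac{i}{\epsilon}$ times a Hermitian multiplier. Hence $e^{\tau T}$ and $e^{\tau W}$ are unitary on $L^2$. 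Restricted to trigonometric polynomials of degree $<M/2$, the full step is $L^2$-contractive, because Parseval converts the discrete norm of Lemma~\ref{lemma_sp1_mass_conservation} into the $L^2$ norm of the interpolant.

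\textbf{Step 2: decompose the error.} Set $e^n := \Phi(t_n)-\Phi_I^n$. Then
\[
e^{n+1} = \mathcal{S}_\tau e^n + \bigl(e^{\tau(T+W)}-S_\tau\bigr)\Phi(t_n) + \bigl(S_\tau-\mathcal{S}_\tau\bigr)\Phi(t_n).
\]
Stability of $\mathcal{S}_\tau$ needs care, because $I_M$ acting on a non-band-limited error is not contractive. I would therefore compare with $P_M\Phi(t_n)$ or $I_M\Phi(t_n)$ rather than with $\Phi(t_n)$. Then $\|e^n\|\le \|\Phi(t_n)-I_M\Phi(t_n)\| + \|I_M\Phi(t_n)-\Phi_I^n\|$, and the second piece satisfies an exact isometric recursion with a local defect. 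Summing the defects gives $\|e^n\|\le \sum_k(\text{local temporal}+\text{local spatial})$.

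\textbf{Step 3: local temporal error.} This step is the main obstacle. The difference $e^{\tau(T+W)}-e^{\tau W}e^{\tau T}$ applied to $\Phi(t_n)$ can be expressed by the variation-of-constants identity
\[
\int_0^\tau e^{(\tau-s)(T+W)}\Bigl(\int_0^{s} e^{(s-\sigma)W}[T,W]\,e^{\sigma W}\,d\sigma\Bigr)\cdots\,ds .
\]
The leading local error is thus $\tfrac{\tau^2}{2}[T,W]\Phi$ plus higher-order terms. One computes
\[
[T,W]=-\sigma_1(\partial_x W) - \tfrac{i}{\epsilon}[\sigma_3,W] + \text{(terms from } \sigma_1 \text{ not commuting with } W),
\]
and each term is of size $O(\epsilon^{-2})$. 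The piece $\tfrac{i}{\epsilon}\bigl(-\tfrac{i}{\epsilon}\bigr)A_1[\sigma_3,\sigma_1]$ is $O(\epsilon^{-2})$, and the derivative term is $\epsilon^{-1}$ times $D_1$. No derivative of $\Phi$ is needed at leading order, because $T$ hits the smooth multiplier $W$. To avoid the unbounded $\partial_x$ inside the remainders, I would write the local error through the equation itself, using $\partial_t\Phi=(T+W)\Phi$ and $\partial_t^2\Phi$. By assumption (B) these are bounded by $C/\epsilon$ and $C/\epsilon^2$, so the remainder terms are $O(\tau^2/\epsilon^2)$ using unitarity. The key condition is $\tau=o(\epsilon)$: it ensures that $e^{\tau W}=I+O(\tau/\epsilon)$ expansions are legitimate, so that third-order terms of size $\tau^3/\epsilon^3$ are dominated by $\tau^2/\epsilon^2$. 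Summing $T_\text{end}/\tau$ copies of the $O(\tau^2/\epsilon^2)$ local error gives $C_1T_\text{end}\,\tau/\epsilon^2$.

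\textbf{Step 4: local spatial error.} The interpolation error of a function with $\|\partial_x^m f\|_\infty\le C_m\epsilon^{-m}$ is bounded by $E\,(h/(\epsilon(b-a)))^m$, by the standard Fourier tail bound $|\hat f_l|\lesssim |\mu_l|^{-m}\|\partial_x^m f\|$ together with aliasing. Applying this bound to $\Phi(t_n)$ and to $e^{\tau T}\Phi(t_n)$ requires that the latter obey the same derivative bounds. This holds because $e^{\tau T}$ commutes with $\partial_x$ and is unitary in each $H^k$. The multiplication by $e^{\tau W}$ and its derivatives costs factors $(\tau/\epsilon)^k\lesssim1$, using $V,A_1\in C^m$. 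Each step therefore contributes $O((h/\epsilon(b-a))^m)$ without a factor of $\tau$, and summing over steps yields the $E_{m,1}\tfrac{T_\text{end}}{\tau}(\cdot)^m$ term. Here $h=o(\epsilon)$ guarantees the bound is meaningful. Adding the Step 3 and Step 4 contributions proves \eqref{equ-thm-error-fun-sp1}.
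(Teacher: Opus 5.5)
Your proposal is correct and has the same skeleton as the paper's proof. Both use a per-step splitting defect of size $O(\tau^2/\epsilon^2)$, and a per-step interpolation defect of size $O\bigl((h/(\epsilon(b-a)))^m\bigr)$ obtained from Leibniz' rule and $\|\partial_x^m(e^{\tau W}e^{\tau T}\Phi(t_n))\|_{L^2}\lesssim\epsilon^{-m}$. Both propagate the error $l^2$-isometrically via mass conservation and Parseval, and sum over $T_\text{end}/\tau$ steps. Two ingredients differ.

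\textbf{Local error.} The paper uses the Baker--Campbell--Hausdorff expansion (Lemma~\ref{lemma-splitting-error}), which is only formal here because $T$ is unbounded. Your variation-of-constants formula, once completed, is an exact identity:
\[
\bigl(e^{\tau W}e^{\tau T}-e^{\tau(T+W)}\bigr)\Phi(t_n)
=-\int_0^\tau e^{(\tau-s)(T+W)}\int_0^s e^{(s-\sigma)W}[T,W]\,e^{\sigma W}\,d\sigma\;e^{sT}\Phi(t_n)\,ds .
\]
Since $[T,W]=-\sigma_1(\partial_xW)+\tfrac{2i}{\epsilon^2}A_1\sigma_2$ is a bounded multiplier, unitarity gives the bound $\tfrac{\tau^2}{2}\|[T,W]\|_{L^\infty}\|\Phi(t_n)\|_{L^2}$ directly. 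So in this step you do not need the discussion of higher-order remainders, the $\partial_t^2\Phi$ route, or $\tau=o(\epsilon)$. The derivative bounds on $\Phi$ in (B) then enter only through the spatial part.

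\textbf{Propagation.} Anchoring at $I_M\Phi(t_n)$ makes explicit an aliasing term that the paper's chain $\|\gamma^n-\Phi^{n,*}\|_{l^2}=\|\Phi^n_{\mathrm{exact}}-\Phi^n\|_{l^2}$ passes over: the grid values of $e^{\tau T}\Phi(t_n)$ are not those of $e^{\tau T}I_M\Phi(t_n)$. This costs one more $O\bigl((h/(\epsilon(b-a)))^m\bigr)$ per step, so the final estimate is unchanged.

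\textbf{Trade-off.} The paper's commutator bookkeeping carries over directly to $S_2$ and $S_{4c}$. There the relevant commutators, such as $[T,[T,W]]$, contain $\partial_x$, so (B) genuinely enters. Your Duhamel argument would need iterated versions to cover those schemes.
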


\begin{thm} \label{thm-error-fun-sp2}
 Let $\Phi(t_n,\cdot)$ be the exact solution of \eqref{def_Dirac_simplified} at time $t = t_n$ and $\Phi_I^{n}(\cdot)$ be the numerical solution given by the $S_2$ method. Under assumptions (A) and (B), and assuming $ 0< \epsilon \leq 1$, $\tau = o(\epsilon)$, $h = o(\epsilon)$, the following error estimate for $S_2$ holds for $0 \leq n \leq T_\text{end} /\tau$

    \begin{equation} \label{equ-thm-error-fun-sp2}
        \left\| \Phi(t_n,\cdot) - \Phi_I^{n} \right\|_{L^2} \leq C_2 T_\text{end} \left( \frac{\tau^2}{\epsilon^3} \right) + E_{m,2} \cdot \frac{T_\text{end}}{\tau} \left( \frac{h}{\epsilon(b - a)} \right)^m,
    \end{equation}
    \noindent
    where $C_2$ and $E_{m,2}$ are  positive constants independent of $\epsilon, \tau, h$. 
\end{thm}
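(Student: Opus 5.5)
The proof of Theorem~\ref{thm-error-fun-sp2} follows the same skeleton as the $S_1$ estimate: a Lady Windermere's fan argument driven by the unitarity of the split propagators, combined with a spectral projection/interpolation argument in space. I introduce the continuous-in-space semi-discrete Strang flow $\mathcal{S}_\tau := e^{\frac{\tau}{2}W}e^{\tau T}e^{\frac{\tau}{2}W}$, acting on $L^2(\Omega)^2$ with periodic data. The error splits as
\[
\Phi(t_n)-\Phi_I^n=\bigl(\Phi(t_n)-P_M\Phi(t_n)\bigr)+\bigl(P_M\Phi(t_n)-\Phi_I^n\bigr),
\]
where $P_M$ is the Fourier projection (or interpolation $I_M$). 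The first term is bounded by $(h/(b-a))^m\|\partial_x^m\Phi\|\lesssim (h/(\epsilon(b-a)))^m$ by assumption (B). For the second term I write $e^n:=P_M\Phi(t_n)-\Phi_I^n$ and derive a recursion
\[
e^{n+1}=\mathcal{S}^{M}_\tau e^n+\eta^n_{\rm time}+\eta^n_{\rm space},
\]
where $\mathcal{S}^M_\tau$ is the fully discrete Strang step. Its $L^2$ stability (norm one) follows from Lemma~\ref{lemma_sp2_mass_conservation}, together with the unitarity of $e^{\tau T}$ on Fourier modes and of $e^{\frac{\tau}{2}W}$ pointwise. Summing the recursion gives $\|e^n\|\le \sum_k(\|\eta^k_{\rm time}\|+\|\eta^k_{\rm space}\|)$, so it suffices to show local errors $\|\eta_{\rm time}\|\lesssim \tau^3/\epsilon^3$ and $\|\eta_{\rm space}\|\lesssim (h/(\epsilon(b-a)))^m$. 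Multiplying by $n\le T_{\rm end}/\tau$ then gives exactly the two terms in the statement.

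For the local temporal error I use the standard representation of the Strang defect via Duhamel/variation-of-constants expansions. Writing $\Phi(t_{n}+\tau)=e^{\tau(T+W)}\Phi(t_n)$ and expanding both it and $\mathcal{S}_\tau\Phi(t_n)$, the first-order terms cancel, and the second-order commutator terms cancel by symmetry. What remains is an integral remainder involving double commutators $[T,[T,W]]$ and $[W,[W,T]]$ applied to $e^{sT}e^{s'W}\Phi$, of size $\tau^3$ times the sup over intermediate times. Here I count $\epsilon$ powers. Each $W$ carries a factor $1/\epsilon$. $T=-\sigma_1\partial_x-\frac{i}{\epsilon}\sigma_3$, but $[T,W]$ only involves $\sigma_1\partial_x$ acting on the potentials and the commutator $[\sigma_3,\sigma_1]$, giving $O(1/\epsilon^2)$ operators of order at most one in $\partial_x$. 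The key point is that the $\frac{i}{\epsilon}\sigma_3$ part of $T$ appears in $[T,W]$ only through $[\sigma_3,A_1\sigma_1]/\epsilon^2$, a bounded matrix. When applied to $\Phi$, the derivatives $\partial_x$ cost $1/\epsilon$ by (B). I then need to verify carefully that the double commutators, applied to the (propagated) solution, total $O(\epsilon^{-3})$ rather than $O(\epsilon^{-4})$. This bookkeeping is the main obstacle.

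For the main obstacle, my first attempt is to avoid expanding with raw operators. Instead I would use the Taylor-in-time form of the defect,
\[
\mathcal{L}(\tau)=\int_0^\tau\!\!\int_0^{s}\!\!\int_0^{r}\cdots,
\]
with the integrand rewritten through $\partial_t\Phi=(T+W)\Phi$. The combination $(T+W)\Phi$ is then replaced by $\partial_t\Phi$, which by (B) costs only $1/\epsilon$, instead of crude bounds $\|T\|,\|W\|\sim 1/\epsilon$ applied repeatedly. Concretely, the defect's third-order term is a combination like $[W,[W,T]]\Phi$ and $[T,[T,W]]\Phi$. I would rewrite $T\Phi=\partial_t\Phi-W\Phi$ so that each term becomes either (bounded multiplication)$\times\epsilon^{-2}\times\partial_x\Phi$ or $\epsilon^{-2}\times\partial_t\Phi$, or $\epsilon^{-3}\times$(bounded)$\,\Phi$, all $O(\epsilon^{-3})$. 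The intermediate flows $e^{sT}$ and $e^{sW}$ must be shown to preserve these derivative bounds. $e^{sT}$ commutes with $\partial_x$ and is unitary. $e^{sW}$ with $s\le\tau=o(\epsilon)$ multiplies $\partial_x$-derivatives by $1+O(s/\epsilon)$, using $V,A_1\in C^m$ and the assumption $\tau=o(\epsilon)$. This yields $\|\eta_{\rm time}\|\lesssim\tau^3/\epsilon^3$.

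For the spatial local error, $\eta_{\rm space}$ collects the discrepancy between applying $e^{\frac{\tau}{2}W}$ pointwise on the grid and applying it on the function followed by projection. This discrepancy is the aliasing error of products of smooth potentials with $\Phi$. It is bounded by the interpolation estimate $\|I_Mf-f\|\lesssim (h/(b-a))^m\|\partial_x^mf\|$. With (A)–(B), $h=o(\epsilon)$, and the Leibniz rule, $\|\partial_x^m(e^{sW}\Phi)\|\lesssim\epsilon^{-m}$. Here I use that $\partial_x$ hitting $e^{sW}$ produces $s/\epsilon\le 1$ factors, again relying on $\tau=o(\epsilon)$. The $T$-substep is exact on trigonometric polynomials and contributes no spatial error. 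Combining the two local bounds with the stability recursion completes the proof, with constants $C_2,E_{m,2}$ depending only on $C_k,D_k,m$.
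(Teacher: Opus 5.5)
Your proposal is correct and follows essentially the same route as the paper's proof, which is omitted there as ``similar to $S_1$''. Both combine an $O(\tau^3/\epsilon^3)$ local Strang defect from the double commutators, a per-step Fourier interpolation error $O\bigl((h/(\epsilon(b-a)))^m\bigr)$ from Leibniz's rule on $e^{sW}\Phi$, and non-amplifying error propagation by unitarity and Parseval, summed over $n\le T_{\mathrm{end}}/\tau$ steps. Your Duhamel representation of the defect is more legitimate than the paper's Banach-algebra BCH lemma, which does not apply to the unbounded $T$, and the feared $\epsilon^{-4}$ never arises: $[T,[T,W]]$ is a first-order operator with $O(\epsilon^{-2})$ coefficient on $\partial_x$ and $O(\epsilon^{-3})$ zeroth-order part, and $[W,[T,W]]$ is multiplication by an $O(\epsilon^{-3})$ matrix, so the substitution $T\Phi=\partial_t\Phi-W\Phi$ is unnecessary (it would not hold for the intermediate states $e^{sT}e^{s'W}\Phi$ anyway).
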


\begin{thm} \label{thm-error-fun-S4c}
Let $\Phi(t_n,\cdot)$ be the exact solution of \eqref{def_Dirac_simplified} at time $t = t_n$ and $\Phi_I^{n}(\cdot)$ be the numerical solution given by the $S_\text{4c}$ method. Under assumptions (A) and (B), and assuming $ 0< \epsilon \leq 1$, $\tau = o(\epsilon)$, $h = o(\epsilon)$, the following error estimate for $S_\text{4c}$ in 1D holds for $0 \leq n \leq T_\text{end} / \tau$

    \begin{equation} \label{equ-thm-error-fun-s4c}
        \left\| \Phi(t_n) - \Phi_I^{n} \right\|_{L^2} \leq C_3 T_\text{end} \left( \frac{\tau^4}{\epsilon^5}  \right) + E_{m,3} \cdot \frac{T_\text{end}}{\tau} \left( \frac{h}{\epsilon(b - a)} \right)^m,
    \end{equation}
    \noindent
    where $C_3$ and $E_{m,3}$ are positive constants independent of $\epsilon, \tau, h$. 
\end{thm}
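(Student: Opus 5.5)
The proof of Theorem~\ref{thm-error-fun-S4c} will follow the same scheme as for $S_1$: separate a temporal local truncation error from a spatial projection/interpolation error, then accumulate both through a stability argument. Define the semi-discrete (continuous-in-space) $S_\text{4c}$ propagator
\[
\mathcal{S}_\tau := e^{\frac{1}{6}\tau W} e^{\frac{\tau}{2}T} e^{\frac{2}{3}\tau W+\frac{\tau^3}{72}[W,[T,W]]} e^{\frac{\tau}{2}T} e^{\frac16\tau W}.
\]
Write the error $e^n := \Phi(t_n,\cdot)-\Phi_I^n$. We split it as
\[
\Phi(t_{n+1})-\Phi_I^{n+1} = \bigl(e^{\tau(T+W)}-\mathcal{S}_\tau\bigr)\Phi(t_n) + \mathcal{S}_\tau\bigl(\Phi(t_n)-\Phi_I^n\bigr) + \bigl(\mathcal{S}_\tau \Phi_I^n - \Phi_I^{n+1}\bigr).
\]
The three terms are, respectively, the local splitting error, the propagated error, and the spatial (aliasing/interpolation) defect.

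\emph{Stability.} In 1D, $[W,[T,W]]$ contains no derivative. Indeed, $[T,W]$ is a multiplication operator involving $\partial_x V,\partial_x A_1$ and commutators of $\sigma_1,\sigma_3$ with $W$. Hence $[W,[T,W]]$ is a pointwise matrix multiplier of size $O(\epsilon^{-3})$. It is skew-Hermitian, because $T$ and $W$ are skew-adjoint. Therefore every factor of $\mathcal{S}_\tau$ is unitary on $L^2$, consistent with Lemma~\ref{lemma_sp2_mass_conservation}, and the propagated term satisfies $\|\mathcal{S}_\tau e^n\|_{L^2}=\|e^n\|_{L^2}$.

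\emph{Local temporal error.} We aim for
\[
\bigl\|\bigl(e^{\tau(T+W)}-\mathcal{S}_\tau\bigr)\Phi(t_n)\bigr\|_{L^2}\lesssim \tau^5/\epsilon^5 .
\]
The plan is to use the standard Lie-commutator representation of the local error of a symmetric fourth-order splitting, as in \cite{bao2019fourth} (or iterated variation-of-constants/Duhamel formulas). This expresses the defect as $\tau^5$ times integrals of nested commutators of order five in $T,W$ applied to the exact flow. Each $W$ carries $\epsilon^{-1}$. Each $T$ applied to $\Phi$ costs $\epsilon^{-1}$, since $\partial_x\Phi=O(\epsilon^{-1})$ by (B) and $\frac{i}{\epsilon}\sigma_3$ is $O(\epsilon^{-1})$. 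Commutators $[T,\cdot]$ hitting the potentials produce derivatives of $V,A_1$ that are $O(1)$ by (B), so they do not degrade the count. A term of total degree five thus contributes $\epsilon^{-5}$.

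This bookkeeping is the main obstacle. Assumption (B) only controls two time derivatives of $\Phi$, so we must express the nested commutators purely through spatial derivatives of $\Phi$ up to order $4$ (which is $\le m$). We also need $\tau=o(\epsilon)$ to keep the $e^{\tau T}$-type factors and the remainder integrals uniformly bounded. The delicate part is checking that the $\tau^3[W,[T,W]]$ correction exactly cancels the third-order terms, leaving a degree-five residual with the stated $\epsilon$-scaling. For that step we would reuse the BCH computation of \cite{bao2019fourth}, tracking powers of $\epsilon$.

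\emph{Spatial error and summation.} The defect $\mathcal{S}_\tau\Phi_I^n-\Phi_I^{n+1}$ arises only because the multiplicative factors (including the $[W,[T,W]]$ multiplier) are applied at grid points and re-interpolated. We bound it by the Fourier interpolation error $\|f-f_I\|_{L^2}\lesssim (h/(b-a))^m\|\partial_x^m f\|$. For the smooth functions appearing here, (A)–(B) give $\|\partial_x^m f\|\lesssim\epsilon^{-m}$, so we aim for $\lesssim (h/(\epsilon(b-a)))^m$ per step. To make this rigorous we would compare against the exact solution rather than $\Phi_I^n$ itself: the interpolation is taken of smooth exact-solution quantities, and the difference is passed through the unitary discrete steps. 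This uses the discrete Parseval identity from the proof of Lemma~\ref{lemma_sp1_mass_conservation}. Finally, summing $\|e^{n+1}\|\le\|e^n\|+C\tau^5/\epsilon^5+E(h/(\epsilon(b-a)))^m$ over $n\le T_\text{end}/\tau$ steps, with $e^0$ just the initial interpolation error, yields \eqref{equ-thm-error-fun-s4c}.
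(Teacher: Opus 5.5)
Your plan is essentially the paper's argument for $S_{4c}$, which the paper only says is ``similar'' to its $S_1$ proof: an $O(\tau^5/\epsilon^5)$ local splitting error from counting powers of $\epsilon$ in fifth-order nested commutators, a per-step Fourier interpolation error $(h/(\epsilon(b-a)))^m$ of a smooth exact-solution quantity, unitarity of every substep with discrete Parseval to carry the previous error forward, and summation over $T_{\text{end}}/\tau$ steps; your observation that in 1D $[W,[T,W]]=-\frac{4i}{\epsilon^3}A_1^2\sigma_3$ is a skew-Hermitian multiplier is what makes the unitarity hold. As you note, your first decomposition does not close, because the defect $\mathcal{S}_\tau\Phi_I^n-\Phi_I^{n+1}$ needs bounds $\|\partial_x^m(\cdot)\|_{L^2}\lesssim\epsilon^{-m}$ on functions built from the numerical solution, which (A)--(B) do not give; your repair (interpolate $\Gamma_n:=\mathcal{S}_\tau\Phi(t_n)$ and pull the grid difference back through the unitary discrete substeps) is exactly the paper's splitting into $\Phi(t_{n+1})-\Gamma_n$, $\Gamma_n-\Gamma_{n,I}$, $\Gamma_{n,I}-\Phi_I^{n+1}$, so start from it, noting that each $T$-substep in the pull-back adds an aliasing term (which the paper silently writes as an equality) of the same order $(h/(\epsilon(b-a)))^m$, leaving the estimate unchanged.
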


\noindent
Furthermore, similar error estimates for the observables, such as the total probability density and the current density can be established by using the control relation between error estimates for the observables and the error estimate for the wave function $\Phi$ \cite{sp_SchrodingerEquation}, where the total probability density $\rho$ and the current density $J$ are defined as follows:

\begin{align}
    & \rho(t,x) = \Phi(t,x)^* \Phi(t,x), \label{def_total_probability_density}\\
    & J(t,x) = \Phi(t,x)^* \sigma_1 \Phi(t,x). \label{def_ current density} 
\end{align}
Similar to $\Phi_I^n(x)$, we can define the numerical interpolation functions $\rho_I^n(\cdot)$ and $J_I^n(\cdot)$:
\begin{align}
    &\rho^n_I(x) = \sum_{l = -\frac{M}{2}}^{\frac{M}{2}-1}  \hat{\rho}^n_l e^{i \mu_l(x - a)},
     \quad \hat{\rho}^n_l = \frac{1}{M} \sum_{j = 0}^{M-1} \rho^n_j e^{-i \mu_l (x - a)},\\
     & J^n_I(x) = \sum_{l = -\frac{M}{2}}^{\frac{M}{2}-1}  \hat{J}^n_l e^{i \mu_l(x - a)},
     \quad \hat{J}^n_l = \frac{1}{M} \sum_{j = 0}^{M-1} J^n_j e^{-i \mu_l (x - a)}.
\end{align}

\begin{corollary} \label{corollary-sp1}
    Let $\rho (t_n,\cdot)$ and $J(t_n,\cdot)$ be the exact total probability density and current density of \eqref{def_Dirac_simplified} at time $t = t_n$ and $\rho_I^n(\cdot)$ and $J_I^n(\cdot)$ be the numerical total probability density and current density given by the $S_{1}$ method. Under assumptions (A) and (B), and assuming $ 0< \epsilon \leq 1$, $\tau = o(\epsilon)$, $h = o(\epsilon)$, the following error estimates for the $S_1$ method hold for $0 \leq n \leq T_\text{end} / \tau$
    
\begin{align}
    &\left\| \rho(t_n,\cdot) - \rho_I^{n}(\cdot) \right\|_{L^2} \leq \hat{C}_1 T_\text{end} \left( \frac{\tau}{\epsilon^2} \right) + \hat{E}_{m,1} \cdot \frac{T_\text{end}}{\tau} \left( \frac{h}{\epsilon(b - a)} \right)^m \label{eq1-corollary-sp1}, \\
    &\left\| J(t_n,\cdot) - J_I^{n}(\cdot) \right\|_{L^2} \leq \hat{C}_1 T_\text{end} \left( \frac{\tau}{\epsilon^2} \right) + \hat{E}_{m,1} \cdot \frac{T_\text{end}}{\tau} \left( \frac{h}{\epsilon(b - a)} \right)^m, \label{eq2-corollary-sp1}
\end{align}
\noindent
where $\hat{C}_1$ and $\hat{E}_{m,1}$ are positive constants independent of $\epsilon, \tau, h$.
\end{corollary}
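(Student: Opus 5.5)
The plan is to derive Corollary~\ref{corollary-sp1} from Theorem~\ref{thm-error-fun-sp1} by exploiting the quadratic structure of $\rho$ and $J$, in the spirit of \cite{sp_SchrodingerEquation}. Write $e^n(x)=\Phi(t_n,x)-\Phi_I^n(x)$. Both observables have the form $Q(\Phi)=\Phi^*B\Phi$ with $B=I_2$ or $B=\sigma_1$; in either case $B$ is Hermitian and $|B|=1$. The first step is to compare the exact observable with the observable built from the interpolated numerical solution:
\begin{equation*}
\Phi^*B\Phi-(\Phi_I^n)^*B\Phi_I^n=(e^n)^*B\Phi+(\Phi_I^n)^*B e^n .
\end{equation*}
Taking the $L^2$ norm then gives
\begin{equation*}
\|Q(\Phi(t_n))-Q(\Phi_I^n)\|_{L^2}\le \left(\|\Phi(t_n)\|_{L^\infty}+\|\Phi_I^n\|_{L^\infty}\right)\|e^n\|_{L^2}.
\end{equation*}
Assumption (B) with $r=k=0$ bounds $\|\Phi(t_n)\|_{L^\infty}\le C_0$, independently of $\epsilon$.

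The second step is to control $\|\Phi_I^n\|_{L^\infty}$ uniformly. The crude route works directly on the grid. By Lemma~\ref{lemma_sp1_mass_conservation},
\begin{equation*}
\max_j|\Phi^n_j|\le h^{-1/2}\|\Phi^n\|_{l_2}=h^{-1/2}\|\Phi^0\|_{l_2}.
\end{equation*}
This bound is not uniform in $h$, so instead I will use the triangle inequality
\begin{equation*}
\|\Phi_I^n\|_{L^\infty}\le\|\Phi(t_n)\|_{L^\infty}+\|e^n\|_{L^\infty},
\end{equation*}
which reduces the question to bounding $\|e^n\|_{L^\infty}$. Since $e^n$ restricted to trigonometric polynomials of degree $M/2$ obeys the inverse inequality $\|\cdot\|_{L^\infty}\lesssim h^{-1/2}\|\cdot\|_{L^2}$, I split
\begin{equation*}
e^n=(\Phi-I_M\Phi)+(I_M\Phi-\Phi_I^n).
\end{equation*}
The first part is small in $L^\infty$ by (A) and (B), giving $O((h/\epsilon)^m)$. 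For the second part, Theorem~\ref{thm-error-fun-sp1} combined with the inverse inequality gives an $L^\infty$ bound of $h^{-1/2}$ times the error bound of the theorem. Under $\tau=o(\epsilon)$ and $h=o(\epsilon)$, this product should stay $O(1)$ in the regime where the estimate is meaningful. If it does not, I will simply absorb it, since products of the error with itself are higher order. Concretely, the decomposition
\begin{equation*}
\Phi^*B\Phi-(\Phi_I^n)^*B\Phi_I^n=2\,\mathrm{Re}\left((e^n)^*B\Phi\right)-(e^n)^*Be^n
\end{equation*}
shows the quadratic term is controlled by $\|e^n\|_{L^\infty}\|e^n\|_{L^2}$, which is dominated by the linear term.

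The third step handles the fact that $\rho_I^n$ and $J_I^n$ are the Fourier interpolants of the grid values $Q(\Phi^n_j)$, not $Q(\Phi_I^n)$ itself. The difference is
\begin{equation*}
I_M\!\left(Q(\Phi_I^n)\right)-Q(\Phi_I^n),
\end{equation*}
an aliasing error, because $Q(\Phi_I^n)$ has modes up to degree $M$. I will bound it by inserting the exact solution:
\begin{equation*}
I_M Q(\Phi_I^n)-Q(\Phi_I^n)=\left(I_M-\mathrm{Id}\right)\left[Q(\Phi_I^n)-Q(\Phi(t_n))\right]+\left(I_M-\mathrm{Id}\right)Q(\Phi(t_n)).
\end{equation*}
\begin{itemize}
\item The last term is a pure interpolation error of the smooth function $Q(\Phi)$. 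By (B) and the Leibniz rule it is $O((h/\epsilon)^m)$, which is absorbed into $\hat E_{m,1}\frac{T_{\rm end}}{\tau}(h/(\epsilon(b-a)))^m$.
\item For the first term, note that $I_M$ depends only on grid values. Its $L^2$ norm is therefore bounded by the discrete $l_2$ norm of $Q(\Phi^n_j)-Q(\Phi(t_n,x_j))$, which is treated exactly as in the first step but at the grid level. This requires the grid error $\|\Phi(t_n,x_j)-\Phi^n_j\|_{l_2}$, which the proof of Theorem~\ref{thm-error-fun-sp1} controls, or which follows from the $L^2$ error through Parseval for the trigonometric-polynomial part.
\end{itemize}

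The main obstacle is the uniform $L^\infty$ bound on the numerical solution in the second step, since mass conservation alone gives only an $h^{-1/2}$ bound. If the inverse-inequality argument turns out too lossy, I will settle the point with the quadratic-remainder decomposition: I will assume the error bound of Theorem~\ref{thm-error-fun-sp1} is at most $1$ in the relevant regime, so that the quadratic term is controlled by the linear one, and then fold all constants into $\hat C_1$ and $\hat E_{m,1}$.
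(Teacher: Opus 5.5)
The step you flag as the main obstacle is not actually resolved, and the whole argument hinges on it. Your algebra in the first and third steps is correct. Work on the grid as in your third step: write $\rho(t_n)-\rho_I^n=(\mathrm{Id}-I_M)\rho(t_n)$ plus $I_M$ of the grid values $\rho(t_n,x_j)-\rho^n_j$. Parseval and Lemma~\ref{lemma-fourier-error} then reduce the corollary to the pointwise bound $|\rho(t_n,x_j)-\rho^n_j|\le(|\Phi(t_n,x_j)|+|\Phi^n_j|)\,|\Phi(t_n,x_j)-\Phi^n_j|$, and likewise for $J$. So what you need is $\max_j|\Phi^n_j|\lesssim 1$ uniformly in $h$.

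Lemma~\ref{lemma_sp1_mass_conservation} gives only $h^{-1/2}$. Your inverse-inequality argument gives $\|I_M\Phi(t_n)-\Phi_I^n\|_{L^\infty}\lesssim h^{-1/2}\mathcal{E}_n$, where $\mathcal{E}_n$ denotes the right-hand side of \eqref{equ-thm-error-fun-sp1}. This is $O(1)$ only if $\mathcal{E}_n\lesssim h^{1/2}$, which does not follow from $\tau=o(\epsilon)$, $h=o(\epsilon)$. These hypotheses do not even make $\tau/\epsilon^2$ small (take $\tau=\epsilon^{3/2}$). Moreover, for fixed $\tau$ one has $h^{-1/2}\mathcal{E}_n\ge h^{-1/2}C_1T_{\mathrm{end}}\tau/\epsilon^2\to\infty$ as $h\to 0$, so the bound degenerates precisely under spatial refinement.

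Your fallback is circular. In $L^2$ the quadratic remainder obeys $\|(e^n)^*Be^n\|_{L^2}\le\|e^n\|_{L^4}^2\le\|e^n\|_{L^\infty}\|e^n\|_{L^2}$. Knowing $\|e^n\|_{L^2}\le 1$ says nothing about $\|e^n\|_{L^4}$: an error of small $L^2$ norm concentrated on a short interval has large $L^4$ norm. So ``quadratic dominated by linear'' presupposes the missing $L^\infty$ bound. The extra assumption $\mathcal{E}_n\le 1$, which is not in the corollary, does not help. Nor can anything be absorbed when $\mathcal{E}_n\gtrsim 1$, since a priori $\|\rho_I^n\|_{L^2}=(h\sum_j|\Phi^n_j|^4)^{1/2}$ is only $\lesssim h^{-1/2}\|\Phi^0\|_{l_2}^2$.

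For comparison, the paper gives no argument beyond citing the control relation of \cite{sp_SchrodingerEquation}, i.e.\ your first step. That relation closes with mass conservation alone only in an $L^1$-type norm. There, Cauchy--Schwarz gives $h\sum_j|\rho(t_n,x_j)-\rho^n_j|\le(\|\Phi^n_{\mathrm{exact}}\|_{l_2}+\|\Phi^n\|_{l_2})\|\Phi^n_{\mathrm{exact}}-\Phi^n\|_{l_2}$, the same holds for $J$ since $|\sigma_1|=1$, and Lemma~\ref{lemma_sp1_mass_conservation} controls the bracket.

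For the $L^2$ statement as written you need an additional ingredient. One option is an $\epsilon$-scaled $H^1$ error bound $\epsilon\|\partial_x e^n\|_{L^2}\lesssim\mathcal{E}_n$. This is plausible along the lines of Theorem~\ref{thm-error-fun-sp1}, since $e^{\tau T}$ is an isometry on every $H^s$ and $\partial_x e^{\tau W}=O(\tau/\epsilon)$. Combined with the one-dimensional Gagliardo--Nirenberg inequality it yields $\|e^n\|_{L^\infty}\lesssim\epsilon^{-1/2}\mathcal{E}_n$. That bound is uniform in $h$, but it still requires an extra smallness hypothesis such as $\mathcal{E}_n\lesssim\epsilon^{1/2}$.
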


\begin{corollary} \label{corollary-sp2}
Let $\rho (t_n,\cdot)$ and $J(t_n,\cdot)$ be the exact total probability density and current density of \eqref{def_Dirac_simplified} at time $t = t_n$ and $\rho_I^n(\cdot)$ and $J_I^n(\cdot)$ be the numerical total probability density and current density given by the $S_2$ method. Under assumptions (A) and (B), and assuming $ 0< \epsilon \leq 1$, $\tau = o(\epsilon)$, $h = o(\epsilon)$, the following error estimates for the $S_2$ method hold for $0 \leq n \leq T_\text{end} / \tau$
    
\begin{align}
    &\left\| \rho(t_n,\cdot) - \rho_I^{n}(\cdot) \right\|_{L^2} \leq \hat{C}_2 T_\text{end} \left( \frac{\tau^2}{\epsilon^3} \right) + \hat{E}_{m,2} \cdot \frac{T_\text{end}}{\tau} \left( \frac{h}{\epsilon(b - a)} \right)^m \label{eq1-corollary-sp2}, \\
    &\left\| J(t_n,\cdot) - J_I^{n}(\cdot) \right\|_{L^2} \leq \hat{C}_2 T_\text{end} \left( \frac{\tau^2}{\epsilon^3} \right) + \hat{E}_{m,2} \cdot \frac{T_\text{end}}{\tau} \left( \frac{h}{\epsilon(b - a)} \right)^m, \label{eq2-corollary-sp2}
\end{align}
\noindent
where $\hat{C}_2$ and $\hat{E}_{m,2}$ are positive constants independent of $\epsilon, \tau, h$.
\end{corollary}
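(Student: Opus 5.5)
The plan is to deduce Corollary \ref{corollary-sp2} from Theorem \ref{thm-error-fun-sp2} through the quadratic structure of $\rho$ and $J$. Both are bilinear in $\Phi$. Writing $\Phi=\Phi(t_n,\cdot)$, $\Phi_I=\Phi_I^n$ and $\delta=\Phi-\Phi_I$, we have the pointwise identity
\begin{equation*}
\Phi^*M\Phi-\Phi_I^*M\Phi_I=\delta^*M\Phi+\Phi_I^*M\delta,\qquad M\in\{I_2,\sigma_1\}.
\end{equation*}
Since $\sigma_1$ is unitary, this gives
\begin{equation*}
\bigl|\Phi^*M\Phi-\Phi_I^*M\Phi_I\bigr|\le|\delta|\bigl(|\Phi|+|\Phi_I|\bigr).
\end{equation*}
I do not compare $\rho_I^n$ directly with $(\Phi_I^n)^*\Phi_I^n$. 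Instead I split
\begin{equation*}
\rho(t_n)-\rho_I^n=\bigl(\rho(t_n)-(\rho(t_n))_I\bigr)+\bigl((\rho(t_n))_I-\rho_I^n\bigr).
\end{equation*}
Here $(\cdot)_I$ is the Fourier interpolant \eqref{fourier interpolation function}, and $\rho_j^n=|\Phi_j^n|^2$. The same splitting is used for $J$.

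For the first piece I use the standard interpolation estimate $\|f-f_I\|_{L^2}\lesssim (h/(b-a))^m\|\partial_x^m f\|_{L^2}$. By (B) and the Leibniz rule, $\|\partial_x^m\rho\|_{L^\infty}\lesssim\epsilon^{-m}$, and likewise for $J$. So this piece is $O((h/(\epsilon(b-a)))^m)$, which is absorbed into the spatial term because $T_\text{end}/\tau\ge1$.

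For the second piece, Parseval for trigonometric polynomials of degree $<M/2$ gives $\|g_I\|_{L^2}=\|g\|_{l_2}$ on the grid. Hence
\begin{equation*}
\|(\rho(t_n))_I-\rho_I^n\|_{L^2}=\bigl\|\,|\Phi(t_n,x_j)|^2-|\Phi_j^n|^2\bigr\|_{l_2}.
\end{equation*}
Note that $\Phi_j^n=\Phi_I^n(x_j)$. The pointwise identity together with Cauchy--Schwarz bounds this by
\begin{equation*}
\Bigl(\|\Phi\|_{L^\infty}+\max_j|\Phi_j^n|\Bigr)\,\bigl\|\Phi(t_n,\cdot)-\Phi^n\bigr\|_{l_2}.
\end{equation*}
Assumption (B) with $r=k=0$ gives $\|\Phi\|_{L^\infty}\le C_0$.

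It remains to control the grid error $\|\Phi(t_n,\cdot)-\Phi^n\|_{l_2}$ and the sup of the numerical solution. Applying Parseval again,
\begin{equation*}
\|\Phi(t_n,\cdot)-\Phi^n\|_{l_2}=\|(\Phi(t_n))_I-\Phi_I^n\|_{L^2}\le\|\Phi(t_n)-\Phi_I^n\|_{L^2}+\|\Phi(t_n)-(\Phi(t_n))_I\|_{L^2}.
\end{equation*}
The first term is bounded by Theorem \ref{thm-error-fun-sp2}. The second is again an interpolation error of size $O((h/\epsilon)^m)$.

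The expected main obstacle is a uniform $L^\infty$ bound on $\Phi_j^n$. Mass conservation (Lemma \ref{lemma_sp2_mass_conservation}) only gives $l_2$ control, and the inverse inequality $\max_j|U_j|\le h^{-1/2}\|U\|_{l_2}$ would introduce a bad factor $h^{-1/2}$. I would handle this in one of two ways:
\begin{itemize}
\item \emph{Bootstrap (first choice).} Write
\begin{equation*}
\max_j|\Phi_j^n|\le C_0+\max_j|\Phi(t_n,x_j)-\Phi_j^n|\le C_0+h^{-1/2}\|\Phi(t_n)-\Phi^n\|_{l_2}.
\end{equation*}
The last term is $O(1)$ whenever the error bound of Theorem \ref{thm-error-fun-sp2} is $O(h^{1/2})$, which holds in the resolved regime $\tau=o(\epsilon)$, $h=o(\epsilon)$ once $m$ is large. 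Outside that regime the right-hand side of the corollary is $\gtrsim1$, and the estimate follows trivially from the $L^2$ boundedness given by mass conservation. I expect the case bookkeeping here to be the only delicate part.
\item \emph{Fallback.} Bound the product in $L^1$ instead and accept the constant. Or, following \cite{sp_SchrodingerEquation}, use an $L^\infty$ stability of the unitary splitting steps in a Sobolev norm $H^1$ with $\epsilon$-weighted derivatives.
\end{itemize}

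Once these pieces are combined, the bound for $\rho$ is obtained with constants $\hat C_2=C(C_0)\,C_2$ and $\hat E_{m,2}$ collecting $E_{m,2}$ and the interpolation constants. The bound for $J$ is identical, since $|\sigma_1 v|=|v|$.
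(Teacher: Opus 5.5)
\paragraph{Verdict.} Your reduction of the linear part is sound and matches what the paper intends. The paper itself gives only a one-line pointer to the ``control relation'' of \cite{sp_SchrodingerEquation}. Your three steps are correct: splitting off the interpolation error of $\rho(t_n,\cdot)$, bounding it by Leibniz and (B), and reducing the grid-level part via Parseval and Theorem \ref{thm-error-fun-sp2}. The gap is exactly at the step you flagged, and your bootstrap does not close it.

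\paragraph{Why the bootstrap fails.} Put $\delta_j=\Phi(t_n,x_j)-\Phi_j^n$ and $e=\|\delta\|_{l_2}$. Then $|\Phi(t_n,x_j)|^2-|\Phi_j^n|^2=2\,\mathrm{Re}\bigl(\Phi(t_n,x_j)^*\delta_j\bigr)-|\delta_j|^2$. The linear term contributes at most $2C_0e$. The quadratic term has $l_2$ norm $\|\delta\|_{l_4}^2$, which $l_2$ information bounds only by $h^{-1/2}e^2$, and a $\delta$ concentrated at a single node attains this. Your assertion that $e=O(h^{1/2})$ in the regime $\tau=o(\epsilon)$, $h=o(\epsilon)$ ``once $m$ is large'' is false. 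The temporal part $C_2T_{\text{end}}\tau^2/\epsilon^3$ of Theorem \ref{thm-error-fun-sp2} does not involve $h$, and the hypotheses impose no coupling between $\tau$ and $h$. Take $\epsilon=1$, $\tau$ fixed and small, and $h\to0$. The right-hand side of the corollary stays of size $\tau^2\ll1$, so you are in neither of your two cases. What your argument actually delivers is $\|\rho(t_n,\cdot)-\rho_I^n\|_{L^2}\lesssim e+h^{-1/2}e^2$, i.e.\ a ``constant'' growing like $h^{-1/2}\tau^2$. The other half of your dichotomy fails too. Mass conservation controls $\|\Phi^n\|_{l_2}$, hence $\rho^n$ only in $l_1$. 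But $\|\rho_I^n\|_{L^2}=\bigl(h\sum_j|\Phi_j^n|^4\bigr)^{1/2}$ is bounded by mass conservation only by $h^{-1/2}\|\Phi^0\|_{l_2}^2$. So even a right-hand side $\gtrsim1$ does not make the $L^2$ estimate trivial.

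\paragraph{What is missing.} Theorem \ref{thm-error-fun-sp2} and Lemma \ref{lemma_sp2_mass_conservation} cannot by themselves yield the $L^2$ statement. You need an additional ingredient: an $h$- and $\epsilon$-uniform bound on $\max_j|\Phi_j^n|$, or equivalently on $\|\delta\|_{l_\infty}$ or $\|\delta\|_{l_4}$. Your fallbacks do not provide it.
\begin{itemize}
\item Passing to $L^1$ proves a weaker statement than the one claimed. There $\|\Phi^*M\Phi-\tilde\Phi^*M\tilde\Phi\|_{L^1}\le\|\Phi-\tilde\Phi\|_{L^2}(\|\Phi\|_{L^2}+\|\tilde\Phi\|_{L^2})$, which does close with mass conservation alone.
\item An $\epsilon$-weighted $H^1$ stability bound $\|\epsilon\partial_x\Phi_I^n\|_{L^2}\lesssim1$ gives, via $\|u\|_{L^\infty}^2\lesssim\|u\|_{L^2}^2+\|u\|_{L^2}\|\partial_xu\|_{L^2}$, only $\|\Phi_I^n\|_{L^\infty}\lesssim\epsilon^{-1/2}$. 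That destroys the $\epsilon$-independence of $\hat C_2$.
\end{itemize}
To complete the proof you would need one of two things. One option is an error estimate for $S_2$ in a norm controlling $L^\infty$ (e.g.\ an $H^1$ error bound combined with the inequality above) whose right-hand side is uniformly small. The other is an explicit extra mesh condition guaranteeing that the bound of Theorem \ref{thm-error-fun-sp2} is $O(h^{1/2})$, under which your bootstrap is valid. The paper's appeal to \cite{sp_SchrodingerEquation} passes over this same point, so your write-up is more careful than the paper's. As it stands, though, it does not establish the $L^2$ bound as stated.
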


\begin{corollary} \label{corollary-s4c}
    Let $\rho (t_n,\cdot)$ and $J(t_n,\cdot)$ be the exact total probability density and current density of \eqref{def_Dirac_simplified} at time $t = t_n$ and $\rho_I^n(\cdot)$ and $J_I^n(\cdot)$ be the numerical total probability density and current density given by the $S_{4c}$ method. Under assumptions (A) and (B), and assuming $ 0< \epsilon \leq 1$, $\tau = o(\epsilon)$, $h = o(\epsilon)$, the following error estimates for the $S_{4c}$ method hold for $0 \leq n \leq T_\text{end} / \tau$
    
\begin{align}
    &\left\| \rho(t_n,\cdot) - \rho_I^{n}(\cdot) \right\|_{L^2} \leq \hat{C}_3 T_\text{end} \left( \frac{\tau^4}{\epsilon^5} \right) + \hat{E}_{m,3} \cdot \frac{T_\text{end}}{\tau} \left( \frac{h}{\epsilon(b - a)} \right)^m \label{eq1-corollary-s4c}, \\
    &\left\| J(t_n,\cdot) - J_I^{n}(\cdot) \right\|_{L^2} \leq \hat{C}_3 T_\text{end} \left( \frac{\tau^4}{\epsilon^5} \right) + \hat{E}_{m,3} \cdot \frac{T_\text{end}}{\tau} \left( \frac{h}{\epsilon(b - a)} \right)^m, \label{eq2-corollary-s4c}
\end{align}
\noindent
where $\hat{C}_3$ and $\hat{E}_{m,3}$ are positive constants independent of $\epsilon, \tau, h$.
\end{corollary}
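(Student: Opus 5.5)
The plan is to derive Corollary~\ref{corollary-s4c} from Theorem~\ref{thm-error-fun-S4c} via pointwise algebraic identities for quadratic observables, following the ``control relation'' of \cite{sp_SchrodingerEquation}. The key identities are, for $\Phi,\Psi\in\mathbb{C}^2$,
\[
|\Phi|^2-|\Psi|^2=\Phi^*(\Phi-\Psi)+(\Phi-\Psi)^*\Psi,\qquad \Phi^*\sigma_1\Phi-\Psi^*\sigma_1\Psi=\Phi^*\sigma_1(\Phi-\Psi)+(\Phi-\Psi)^*\sigma_1\Psi .
\]
Since $\sigma_1$ is unitary, both differences are bounded pointwise by $(|\Phi|+|\Psi|)\,|\Phi-\Psi|$.

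We cannot simply apply these identities to $\rho(t_n,\cdot)-|\Phi_I^n|^2$, because $\rho_I^n$ is the trigonometric interpolant of the grid values $\rho^n_j=|\Phi^n_j|^2$, not $|\Phi^n_I|^2$. So the error is split into two pieces:
\[
\rho(t_n)-\rho_I^n=\bigl(\rho(t_n)-I_M\rho(t_n)\bigr)+I_M\bigl(\rho(t_n)-\rho^n\bigr),
\]
where $I_M$ denotes Fourier interpolation on the grid.

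\emph{First piece.} Assumption (B) gives $\|\partial_x^k\rho\|_{L^\infty}\lesssim \epsilon^{-k}$ via the Leibniz rule. The standard interpolation estimate then bounds $\|\rho-I_M\rho\|_{L^2}$ by $C(h/(\epsilon(b-a)))^m$, which is absorbed into the $\hat E_{m,3}$ term, since $T_{\rm end}/\tau\ge 1$ once $\tau\le T_{\rm end}$.

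\emph{Second piece.} By discrete Parseval, $\|I_M g\|_{L^2}=\|g\|_{l_2}$ on grid data. The pointwise identity then gives
\[
\|\rho(t_n)-\rho^n\|_{l_2}\le \bigl(\|\Phi(t_n)\|_{l^\infty}+\|\Phi^n\|_{l^\infty}\bigr)\,\|\Phi(t_n)-\Phi^n\|_{l_2}.
\]
The factor $\|\Phi(t_n)\|_\infty\le C_0$ comes from (B). The grid error $\|\Phi(t_n)-\Phi^n\|_{l_2}=\|I_M\Phi(t_n)-\Phi^n_I\|_{L^2}$ is at most $\|\Phi(t_n)-\Phi_I^n\|_{L^2}+\|\Phi(t_n)-I_M\Phi(t_n)\|_{L^2}$. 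Both terms are controlled by Theorem~\ref{thm-error-fun-S4c} together with the same interpolation estimate. The current density $J$ is handled identically, using the second identity.

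\emph{Main obstacle: the $l^\infty$ bound on $\Phi^n$.} Mass conservation (Lemma~\ref{lemma_sp2_mass_conservation}) controls only $l_2$, not $l^\infty$. I would first try the inverse inequality $\|e\|_{l^\infty}\le h^{-1/2}\|e\|_{l_2}$ applied to $e=\Phi(t_n)-\Phi^n$. This gives $\|\Phi^n\|_{l^\infty}\le C_0+h^{-1/2}\,\mathrm{err}$. The product then becomes $C_0\,\mathrm{err}+h^{-1/2}\mathrm{err}^2$, and this is bounded by $(C_0+1)\,\mathrm{err}$ as long as $\mathrm{err}\le h^{1/2}$.

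In the regime where $\mathrm{err}>h^{1/2}$, this route fails. Two fallbacks are available:
\begin{itemize}
\item Weaken the target norm: $\|\rho-\rho_I^n\|_{L^1}\le(\|\Phi\|_{L^2}+\|\Phi_I^n\|_{L^2})\,\mathrm{err}$, using mass conservation. Since $\rho$ is bounded, this interpolates to $L^2$ only at the cost of a square root, so it does not give the stated rate.
\item Keep the $L^2$ norm and accept an $\epsilon$-independent constant whenever the right-hand side of the corollary is $O(1)$. In that case the estimate is trivial, because both $\rho$ and $\rho_I^n$ are bounded in $L^2$ by mass-type quantities, after enlarging $\hat C_3$ if needed.
\end{itemize}

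With either case split, the constants $\hat C_3,\hat E_{m,3}$ depend only on $C_0$, $C_3$, $E_{m,3}$ and the interpolation constant. They are therefore independent of $\epsilon,\tau,h$, which proves the corollary.
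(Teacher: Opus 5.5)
You take the route the paper indicates; the paper offers nothing beyond invoking the quadratic ``control relation'' of \cite{sp_SchrodingerEquation}. Your splitting $\rho(t_n)-\rho_I^n=(\rho(t_n)-I_M\rho(t_n))+I_M(\rho(t_n)-\rho^n)$, the interpolation bound for $\rho$, and the discrete Parseval step are all fine. The obstacle you flag is genuine, however, and your case split does not remove it.

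Write $e=\Phi^n-\Phi(t_n)$ on the grid. Then $\rho^n-\rho(t_n)=2\,\mathrm{Re}\,(\Phi(t_n)^*e)+|e|^2$. An $L^2$ bound linear in $\mathrm{err}=\|e\|_{l_2}$ is therefore equivalent to bounding $\||e|^2\|_{l_2}$ linearly by $\|e\|_{l_2}$. In practice this means an $l^\infty$ bound on $e$ (or on $\Phi^n$) that is uniform in $h$ and $\epsilon$. Lemma~\ref{lemma_sp2_mass_conservation} and Theorem~\ref{thm-error-fun-S4c} are purely $L^2$ statements and cannot supply it. Concretely:

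Your first case needs $\mathrm{err}\le h^{1/2}$. As a hypothesis this is a mesh coupling such as $C_3T_{\mathrm{end}}\tau^4/\epsilon^5+E_{m,3}(T_{\mathrm{end}}/\tau)(h/(\epsilon(b-a)))^m\le h^{1/2}$. The corollary does not assume this, and it fails for any fixed $\tau$ as $h\to0$.

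Your second fallback rests on a false claim. Mass conservation bounds $h\sum_j\rho_j^n=\|\Phi^n\|_{l_2}^2$, which is the discrete $l^1$ norm of $\rho^n$. But $\|\rho_I^n\|_{L^2}=\|\rho^n\|_{l_2}=(h\sum_j|\Phi_j^n|^4)^{1/2}$, and for this mass conservation only gives $h^{-1/2}\|\Phi^0\|_{l_2}^2$.

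Even if that fallback were valid, the two cases do not exhaust the possibilities. When $h^{1/2}<\mathrm{err}$ but the right-hand side of the corollary is small, neither applies. All you then have is $\|\rho(t_n)-\rho^n\|_{l_2}\lesssim\mathrm{err}+h^{-1/2}\mathrm{err}^2$.

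Your first fallback proves a different, weaker-norm statement. As written it also quietly replaces $\rho_I^n$ by $|\Phi_I^n|^2$, which you had correctly noted are not the same.

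What your argument does give without extra hypotheses is the discrete $l^1$ estimate $h\sum_j|\rho(t_n,x_j)-\rho_j^n|\le(\|\Phi(t_n)\|_{l_2}+\|\Phi^0\|_{l_2})\,\|\Phi(t_n)-\Phi^n\|_{l_2}$. It follows from Cauchy--Schwarz and mass conservation, and the same holds for $J$ since $\sigma_1$ is unitary. This is the natural output of the control relation the paper cites, and essentially all its one-line justification supports.

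To get the $L^2$ bounds as stated, you need one of two further ingredients. One is a bound on $\Phi^n$ in $l^\infty$ uniform in $h$ and $\epsilon$, for instance from an error estimate in an $\epsilon$-scaled $H^1$-type norm, though that itself only helps under some smallness of the error. The other is an explicit coupling hypothesis added to the statement. Without one of these, the concluding sentence of your proposal does not follow.
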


\noindent
From the above theorems and corollaries, we can obtain that if $\delta$ is the desired error bound, the associated discretization parameters shall satisfy the following conditions:

\begin{align}
    & S_1: \frac{\tau}{\epsilon^2} \lesssim \delta , \qquad \frac{h}{\epsilon} \lesssim \left(\delta \tau\right) ^{\frac{1}{m}}, \label{accuracy-sp1} \\
    & S_2: \frac{\tau^2}{\epsilon^3} \lesssim \delta , \qquad \frac{h}{\epsilon} \lesssim \left(\delta \tau\right) ^{\frac{1}{m}}, \label{accuracy-sp2} \\
    & S_\text{4c}: \frac{\tau^4}{\epsilon^5} \lesssim \delta , \qquad \frac{h}{\epsilon} \lesssim \left(\delta \tau\right) ^{\frac{1}{m}}. \label{accuracy-s4c}
\end{align}

\begin{remark}
    The above conditions show the $\epsilon$-scalability of $\tau$ and $h$. From these results, it is clear that the requirements $\tau = o(\epsilon), h = o(\epsilon)$ in Theorems \ref{thm-error-fun-sp1} - \ref{thm-error-fun-S4c} are reasonable.
\end{remark}

\subsection{Proof of the Main Results}

\noindent
Before proving the Theorems \ref{thm-error-fun-sp1} - \ref{thm-error-fun-S4c}, we first prove the following lemmas.

\begin{lemma} [Baker--Campbell--Hausdorff formula] \label{Baker--Campbell--Hausdorff expansion}
Let $X,Y$ be elements of a Banach algebra with norm $\|\cdot\|$. Then
\begin{equation}
\log(e^X e^Y)
= X + Y + \frac{1}{2}[X,Y]
+ \frac{1}{12}[X,[X,Y]]
- \frac{1}{12}[Y,[X,Y]]
+ R_4(X,Y),
\end{equation}
\noindent
where $R_4(X,Y)$ consists of iterated commutators of order at least $4$.
Moreover, there exists a constant $C_4>0$ such that
\begin{equation}
\|R_4(X,Y)\| \le C_4(\|X\|+\|Y\|)^4 .
\end{equation}
\end{lemma}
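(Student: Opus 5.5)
The plan is to realize $\log(e^Xe^Y)$ as an absolutely convergent power series in the two non-commuting variables $X,Y$. I would then read off the terms of total degree $\le 3$ by direct expansion and bound everything of degree $\ge 4$ by a scalar majorant series. One caveat must be made explicit: the logarithm is only defined by a convergent series when $e^Xe^Y$ is close to the identity. So I would prove the statement for $s:=\|X\|+\|Y\|\le s_0$ with a fixed $s_0<\log 2$, say $s_0=\tfrac12$, and take $C_4=C_4(s_0)$. This is the regime in which the lemma is used later, with $X,Y$ proportional to $\tau$ and $\tau=o(\epsilon)$.

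\emph{Step 1 (definition and convergence).} Set $W:=e^Xe^Y-1=\sum_{(p,q)\neq(0,0)}\frac{X^pY^q}{p!\,q!}$. Submultiplicativity of the norm gives $\|W\|\le e^{\|X\|}e^{\|Y\|}-1=e^{s}-1<1$ for $s<\log 2$. Hence $Z:=\log(1+W)=\sum_{k\ge1}\frac{(-1)^{k+1}}{k}W^k$ converges absolutely. Substituting the series for $W$ and expanding $W^k$ into words in $X,Y$ gives a series $Z=\sum_{n\ge1}Z_n$, where $Z_n$ is the (finite) sum of all words of length $n$ with their coefficients.

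\emph{Step 2 (low-order terms).} This step is a routine bookkeeping computation.
\begin{itemize}
\item Degree 1 comes only from $k=1$ and gives $X+Y$.
\item Degree 2 gives $\tfrac12X^2+XY+\tfrac12Y^2-\tfrac12(X+Y)^2=\tfrac12[X,Y]$.
\item Degree 3 collects the cubic part of $W$, $-\tfrac12$ times the cubic part of $W^2$, and $\tfrac13(X+Y)^3$. Expanding and regrouping gives $\tfrac1{12}[X,[X,Y]]-\tfrac1{12}[Y,[X,Y]]$.
\end{itemize}
For the claim that $R_4:=\sum_{n\ge4}Z_n$ consists of iterated commutators, I would invoke the classical fact (Dynkin's formula, or the Friedrichs criterion that $\log(e^Xe^Y)$ is primitive in the free completed algebra) that each homogeneous component $Z_n$ is a Lie polynomial of degree $n$. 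This is purely algebraic and holds in the free algebra, hence in any Banach algebra.

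\emph{Step 3 (remainder bound), the main point.} I would use a majorant argument. Replace every word by its norm bound $\|X\|^{p}\|Y\|^{q}\le s^{n}$ and every coefficient by its absolute value. Then $\|Z_n\|\le a_n s^n$, where $\sum_n a_n s^n$ is the Taylor series of the scalar function $g(s)=\sum_{k\ge1}\frac1k(e^{s}-1)^k=-\log(2-e^{s})$. This works because replacing $-$ signs by $+$ and both variables by the single scalar $s$ exactly produces $-\log(1-(e^s-1))$. The function $g$ is analytic on $|s|<\log2$, so
\[
\|R_4(X,Y)\|\le\sum_{n\ge4}a_ns^n=s^4\sum_{n\ge4}a_ns^{n-4}\le s^4\sum_{n\ge4}a_ns_0^{n-4}=:C_4\,s^4,
\]
which is finite since $s_0$ lies inside the radius of convergence.

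The main obstacle is stating the domain correctly rather than any estimate. For large $\|X\|+\|Y\|$ the logarithm need not exist, so the lemma has to be read with the smallness restriction $s\le s_0$. I would also remark that in the later application $T$ is unbounded, so the lemma should be applied to the Fourier-truncated (bounded) operators. On frequencies $|\mu_l|\lesssim 1/h$ the norms satisfy $\|\tau T\|\lesssim\tau/h+\tau/\epsilon$, and this is where the assumptions $\tau=o(\epsilon)$ and $h=o(\epsilon)$ will enter.
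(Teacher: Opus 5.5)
Your proof is correct in substance, but it follows a different route from the paper, which gives no argument and simply cites Hall's book. Hall works with matrices of sufficiently small norm. He derives the expansion from the integral representation $\log(e^Xe^Y)=X+\int_0^1 g\bigl(e^{\mathrm{ad}_X}e^{t\,\mathrm{ad}_Y}\bigr)(Y)\,dt$ with $g(z)=\log z/(1-z^{-1})$, which comes from the formula for the derivative of the exponential map. Expanding $g$ in powers of $\mathrm{ad}_X$ and $\mathrm{ad}_Y$ makes every term visibly an iterated commutator, and a quartic remainder bound then follows from analyticity near $(0,0)$.

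Your direct expansion of $\log(1+W)$ with a scalar majorant is more elementary and works verbatim in any unital Banach algebra. It also yields an explicit radius $\log 2$ and an explicit $C_4$, namely the tail of the Taylor series of $-\log(2-e^s)$. The cost is that you import the Lie-polynomial property of each $Z_n$ from Dynkin or Friedrichs as a black box. Your smallness restriction $\|X\|+\|Y\|\le s_0<\log 2$ is genuinely needed. For arbitrary $X,Y$ the printed lemma is false: take $X=0$ and $Y=2\pi i\cdot 1$; all commutators vanish, yet the series logarithm of $e^Y=1$ is $0\neq Y$. Hall's theorem carries a hypothesis of this kind.

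Step 3 needs a small repair. You bound each word by $s^n$ and say the majorant arises from replacing both variables by the single scalar $s$. Taken literally, this gives $\sum_k\frac1k(e^se^s-1)^k=-\log(2-e^{2s})$, whose Taylor coefficients are $2^na_n$ and whose radius is $\tfrac12\log 2<\tfrac12=s_0$, so your series for $C_4$ would diverge. The inequality $\|Z_n\|\le a_ns^n$ is nonetheless true if you substitute the commuting scalars $x=\|X\|$, $y=\|Y\|$ instead. Summing the absolute values of all contributions times $x^py^q$ over words of length $n$ then gives the degree-$n$ part of $-\log(2-e^xe^y)=-\log(2-e^{x+y})$, namely $a_n(x+y)^n=a_ns^n$. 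This is the same majorant you already used for $\|W\|$ in Step 1, and it is also what justifies regrouping $\sum_k\frac{(-1)^{k+1}}{k}W^k$ into homogeneous components.

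Separately, your closing remark on the application does not hold as stated. On modes $|\mu_l|\le\pi/h$ one has $\|\tau T\|\approx\tau/h$ once $h\ll\epsilon$, and $\tau=o(\epsilon)$, $h=o(\epsilon)$ do not make $\tau/h$ small (e.g.\ $\tau=\epsilon^{3/2}$, $h=\epsilon^2$). An operator-norm use of the lemma would therefore need a CFL-type condition $\tau\lesssim h$, and it would give remainders in powers of $\tau/h$ rather than $\tau/\epsilon$. This concerns the paper's use of the lemma, however, not the lemma itself.
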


\begin{proof}
    We refer to \cite{hall2015baker} for the proof.     \hfill $\square$
\end{proof}

\begin{lemma} \label{lemma-splitting-error}
    Under the assumptions of Theorems \ref{thm-error-fun-sp1}, the following estimates for the time-splitting operators hold:
    
    \begin{align} 
        & \left\| \left[ e^{\tau \left( T + W \right)} - e^{\tau W} e^{\tau T} \right] \Phi(t_n, \cdot) \right\|_{L^2} = O \left(\frac{\tau^2}{\epsilon^2} \right),  \label{eq1-lemma-splitting-error} \\
        &\left\| \left[ e^{\tau \left( T + W \right)} - e^{\frac{\tau}{2} W} e^{\tau T} e^{\frac{\tau}{2} W} \right] \Phi(t_n, \cdot) \right\|_{L^2} = O \left(\frac{\tau^3}{\epsilon^3} \right),  \label{eq2-lemma-splitting-error} \\
        &\left\| \left[ e^{\tau \left( T + W \right)} - e^{\frac{1}{6} \tau W } e^{\frac{\tau}{2} T} e^{\frac{2}{3} \tau W + \frac{\tau^3}{72} \left[ W, \left[ T, W \right] \right]} e^{\frac{\tau}{2} T} e^{\frac{1}{6} \tau W} \right] \Phi(t_n, \cdot) \right\|_{L^2}\notag\\& \qquad = O \left(\frac{\tau^5}{\epsilon^5} \right),\label{eq3-lemma-splitting-error}
    \end{align}
\noindent
where $T$ and $W$ are defined in \eqref{def_T}-\eqref{def_W}, and $\Phi(t_n, \cdot)$ is the exact solution at time $t = t_n$.
\end{lemma}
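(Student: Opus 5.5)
The proof combines the BCH formula (Lemma \ref{Baker--Campbell--Hausdorff expansion}) with the scaling hypothesis (B). The key observation is that $T$ and $W$ are unbounded, and $\|\tau T\|$, $\|\tau W\|$ are not small in operator norm. They are small, however, when applied to the smooth, $\epsilon$-oscillatory exact solution. Every factor $\partial_x$ acting on $\Phi(t_n,\cdot)$ costs $\epsilon^{-1}$ by (B), and every factor $\frac{1}{\epsilon}\sigma_3$, $\frac{1}{\epsilon}V$, $\frac{1}{\epsilon}A_1$ costs $\epsilon^{-1}$, since $V$, $A_1$ and their derivatives are bounded by $D_k$. So a word of length $k$ in $T,W$ acting on $\Phi(t_n,\cdot)$, including nested commutators, has $L^2$ norm $\lesssim \epsilon^{-k}$ as long as $k\le m$. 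This is the only way $\epsilon$ enters, and it explains why the $p$-th order local error is $O(\tau^{p+1}/\epsilon^{p+1})$.

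To avoid a formal BCH series in unbounded operators, I would run the standard Duhamel/variation-of-constants argument for each scheme. For $S_1$, set $E(s)=e^{s(T+W)}-e^{sW}e^{sT}$. Differentiating gives
\[
E'(s)=(T+W)E(s)+\bigl[T,e^{sW}\bigr]e^{sT},
\]
and hence
\[
E(\tau)\Phi=\int_0^\tau e^{(\tau-s)(T+W)}\bigl[T,e^{sW}\bigr]e^{sT}\Phi\,ds .
\]
Next expand $[T,e^{sW}]=\int_0^s e^{(s-\sigma)W}[T,W]e^{\sigma W}\,d\sigma$. The operators $e^{\cdot (T+W)}$, $e^{\cdot T}$ and $e^{\cdot W}$ are all unitary on $L^2$, because $T$ and $W$ are skew-adjoint. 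It therefore suffices to bound $\|[T,W]\,e^{\sigma W}e^{sT}\Phi(t_n)\|_{L^2}\lesssim \epsilon^{-2}$. This is where the main obstacle lies. The intermediate propagators must not destroy the $\epsilon^{-1}$-per-derivative scaling. The free Dirac flow $e^{sT}$ commutes with $\partial_x$, so it preserves the $H^k$ bounds. For $e^{\sigma W}$, each derivative of the multiplier $e^{-\frac{i\sigma}{\epsilon}(V-A_1\sigma_1)}$ produces a factor $\sigma/\epsilon\cdot D_1$, which is $O(1)$ since $\sigma\le\tau=o(\epsilon)$. This is precisely where the hypothesis $\tau=o(\epsilon)$ is used. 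Here $[T,W]=-\frac{i}{\epsilon}\sigma_1(\partial_x(V-A_1\sigma_1))+\frac{1}{\epsilon^2}[\sigma_3,V-A_1\sigma_1]+\frac{1}{\epsilon}(\text{first-order part})$, which has order at most $\epsilon^{-1}\partial_x+\epsilon^{-2}$ with bounded coefficients. Applied to a function whose derivatives scale like $\epsilon^{-k}$, it therefore gives $O(\epsilon^{-2})$. The double integral contributes $\tau^2$, which yields \eqref{eq1-lemma-splitting-error}.

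For $S_2$ and $S_{4c}$ the same scheme applies with one and three more levels of expansion, respectively. One writes the error as iterated Duhamel integrals (equivalently, a BCH expansion truncated with integral remainder, cf.\ Lemma \ref{Baker--Campbell--Hausdorff expansion}). The lower-order terms cancel by the symmetry of the Strang splitting, and for $S_{4c}$ by the choice of coefficients $\frac16,\frac12,\frac23$ and the $\frac{\tau^3}{72}[W,[T,W]]$ correction, exactly as in the formal order conditions of \cite{bao2019fourth}. The surviving remainders are integrals of iterated commutators of length $3$, resp.\ $5$, in $T$ and $W$, conjugated by unitary flows and applied to $\Phi(t_n)$. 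By the word-length counting above, with $m\ge4$ (and $m\ge5$ or a slightly stronger regularity implicitly required for $S_{4c}$), these are $O(\epsilon^{-3})$, resp.\ $O(\epsilon^{-5})$, giving \eqref{eq2-lemma-splitting-error} and \eqref{eq3-lemma-splitting-error}. For $S_{4c}$ one must also check that $e^{\frac23\tau W+\frac{\tau^3}{72}[W,[T,W]]}$ is unitary and preserves the derivative scaling. In 1D the double commutator $[W,[T,W]]$ is a multiplication operator of size $\epsilon^{-3}$ (the derivative parts cancel since $W$ commutes with itself), so $\tau^3\epsilon^{-3}=o(1)$ and the argument for $e^{\sigma W}$ carries over.
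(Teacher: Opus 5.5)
Your proof is correct in outline, but it follows a genuinely different and more careful route than the paper.

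\textbf{The paper's route.} The paper argues formally. It invokes the Banach-algebra BCH remainder bound, computes $[T,W]$, $[T,[T,W]]$ and $\mathrm{ad}_T^4(W)$ explicitly, applies them directly to $\Phi(t_n,\cdot)$, and reads off the powers of $\epsilon$. It never addresses that $\tau T$ is unbounded, which makes the bound $C_4(\|X\|+\|Y\|)^4$ vacuous. Nor does it address what the intermediate flows do to the derivatives of $\Phi$.

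\textbf{Your route.} You use a variation-of-constants representation. Unitarity disposes of the flows on the left, and the $\epsilon^{-k}$ derivative scaling is propagated through $e^{sT}$, $e^{\sigma W}$ and the modified $S_{4c}$ factor on the right. This is exactly what turns the paper's computation into a proof, and your scaling count does not rely on special 1D structure.

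\textbf{What you should import from the paper.} The paper's explicit formulas expose the 1D structure. Since $W$ is a combination of $I_2$ and $\sigma_1$, it commutes with $\sigma_1$. Hence $[T,W]=-\sigma_1(\partial_x W)-\frac{i}{\epsilon}[\sigma_3,W]$ is a pure multiplication operator, and your ``$\frac{1}{\epsilon}$(first-order part)'' is zero. Consequently $[T,e^{sW}]=-\sigma_1(\partial_x e^{sW})-\frac{i}{\epsilon}[\sigma_3,e^{sW}]$ has $L^2$ operator norm $\lesssim s/\epsilon^2$. Your first Duhamel identity then gives \eqref{eq1-lemma-splitting-error} in operator norm with no regularity of $\Phi$ at all, so the ``main obstacle'' you identify does not arise for $S_1$.

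The same commutation with $\sigma_1$ is why $[W,[T,W]]=-\frac{4i}{\epsilon^3}A_1^2\sigma_3$ is a multiplication operator in 1D. The reason is not that $W$ commutes with itself. In 2D and 3D it fails, exactly as the paper remarks.

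\textbf{Regularity for $S_{4c}$.} $[T,W]$ has differential order zero. Each further bracket with $T$ raises the order by at most one, and brackets with $W$ do not raise it. So every nested commutator of length $5$ has order at most $3$; the top term is the $A_1\sigma_3\partial_x^3$ piece of $\mathrm{ad}_T^4(W)$ that the paper exhibits. If you write the $S_{4c}$ remainder in terms of nested commutators, $m\ge 4$ suffices and your hedge to $m\ge 5$ is unnecessary. The paper also uses that $\partial_x^3$ term to argue that the $\epsilon^{-5}$ is actually attained; your upper-bound argument neither provides nor needs this.

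\textbf{What remains.} Your proposal, like the paper, leaves unwritten the explicit iterated-Duhamel remainder for $S_2$ and especially for $S_{4c}$ with its modified middle exponent. That is where the claimed cancellations of lower-order terms would actually have to be verified.
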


\begin{proof}

\noindent
    By Lemma \ref{Baker--Campbell--Hausdorff expansion} and the Taylor expansions of the exponential operators, it suffices to compute the commutators on $\Phi(t_n, \cdot)$.
    To simplify the notation, We define: 
    
    \begin{equation}
    R_n(x) = \tau \left( -\frac{i}{\epsilon} V(x) I_2  + \frac{i}{\epsilon} A_1(x) \sigma_1 \right),
    \end{equation}
\noindent
We can compute the commutator $[T, W]$ acting on $\Phi(t_n, \cdot)$ as follows:
\begin{equation}
    \begin{aligned}
    &\left[ T, W \right] \Phi(t_n, \cdot) \\
    &= \tau R_{n}(x) \left( -\sigma_1 \partial_x \Phi(t_n, \cdot) - \frac{i}{\epsilon} \sigma_3 \Phi(t_n, \cdot) \right) + \tau \left( \sigma_1 \partial_x + \frac{i}{\epsilon} \sigma_3 \right) R_{n}(x) \Phi(t_n, \cdot) \\
    &=  - \frac{i}{\epsilon} \tau R_{n}(x) \sigma_3 \Phi(t_n, \cdot) + \tau \sigma_1 R_{n}'(x) \Phi(t_n, \cdot) + \frac{i}{\epsilon} \tau \sigma_3 R_{n}(x) \Phi(t_n, \cdot) \\
    &= \tau \left[ \sigma_1 R_{n}'(x) \Phi(t_n, \cdot) - \frac{2 \tau}{\epsilon^2}   A_1( x) \sigma_3 \sigma_1 \, \Phi(t_n, \cdot) \right].
    \end{aligned}
\end{equation}

\noindent
Similarly, we can derive $[T, [T, W]]\Phi$ by:
\begin{equation}
\begin{aligned}
& \frac{1}{\tau^2} \left[ T, \left[ T, W \right] \right] \Phi(t_n, \cdot) = \frac{1}{\tau^2} \left( T^2 W - 2T WT + W T^2 \right) \Phi(t_n, \cdot) \\
&= \Bigg[ \left( \sigma_1 \, \partial_x + \frac{i}{\epsilon} \sigma_3 \right)^2 R_n - 2 \left( \sigma_1 \, \partial_x + \frac{i}{\epsilon} \sigma_3 \right) R_n \left( \sigma_1 \, \partial_x + \frac{i}{\epsilon} \sigma_3 \right) \notag\\& \qquad+ R_n \left( \sigma_1 \, \partial_x + \frac{i}{\epsilon} \sigma_3 \right)^2 \Bigg] \Phi(t_n, \cdot) \\
&= \Biggl[ \left( \partial_{xx} - \frac{1}{\epsilon^2} \right) R_n \Phi(t_n, \cdot)
- 2 \left( \sigma_1 \partial_x + \frac{i}{\epsilon} \sigma_3  \right) R_n
\left( \sigma_1 \partial_x \Phi(t_n, \cdot) + \frac{i}{\epsilon} \sigma_3 \Phi(t_n, \cdot) \right)\\
& \qquad+ R_n \left( \partial_{xx}\Phi(t_n, \cdot) - \frac{1}{\epsilon^2} \Phi(t_n, \cdot) \right) \Biggr] \\
&= \left[
R_n'' - \frac{2}{\epsilon^2} R_n
- \frac{2i}{\epsilon} \sigma_1 R_n' \sigma_3
+ \frac{2}{\epsilon^2} \sigma_3 R_n \sigma_3
\right] \Phi(t_n, \cdot)
+ \left[ - \frac{2i}{\epsilon} \sigma_1 R_n \sigma_3 - \frac{2i}{\epsilon} \sigma_3 R_n \sigma_1
\right] \partial_x \Phi(t_n, \cdot) \\
&=\left[\left(R_n'' - \frac{2i}{\epsilon} \sigma_1 R_n' \sigma_3 - \frac{4i \tau}{\epsilon^3} A_1(x) \sigma_1  \right)\Phi(t_n, \cdot) + \frac{4 \tau}{\epsilon^2} A_1(x) \sigma_3 \partial_x \Phi(t_n, \cdot)
\right].
\end{aligned}
\end{equation}
\noindent
Therefore, $ \left\| \left[ T, W \right] \Phi(t_n, \cdot) \right\|_{L^2} = O \left(\frac{\tau^2}{\epsilon^2}\right)$ and $\left\| \left[ T, \left[ T, W \right] \right] \Phi(t_n, \cdot) \right\|_{L^2} = O \left( \frac{\tau^3}{\epsilon^3} \right)$.

\noindent
Similarly, it holds that
$\left\| \left[ W,\left[ T,W \right]\right]\Phi(t_n,\cdot) \right\|_{L^2}
= O\left(\frac{\tau^3}{\varepsilon^3}\right)$.
Therefore, invoking Lemma~\ref{Baker--Campbell--Hausdorff expansion},
the estimates \eqref{eq1-lemma-splitting-error} and \eqref{eq2-lemma-splitting-error} can be established through the corresponding third-order commutator analysis. For the estimate \eqref{eq3-lemma-splitting-error}, the Taylor expansion shows that all commutators up to fourth order vanish. Therefore, it remains to compute the fifth commutators, i.e. we only need to prove that $[T,[T,[T, [T, W]]]]\Phi(t_n, \cdot) $ gives rise to a fifth-order error that remains uncanceled by other commutators. Since

\begin{equation}
\begin{aligned}
& [T,[T,[T, [T, W]]]] \\
&= [T,[T,-\frac{4i \tau^3}{\epsilon^3}  A_1(x) \sigma_1 + \frac{4 \tau^3}{\epsilon^2}A_1(x) \sigma_3 \partial_x + o \left(\frac{\tau^3}{\epsilon^3} \right)]] \\
&= [T,[T,-\frac{4i \tau^3}{\epsilon^3} A_1(x) \sigma_1]] + [T,[T, \frac{4 \tau^3}{\epsilon^2}A_1(x) \sigma_3 \partial_x ]] + o \left(\frac{\tau^5}{\epsilon^5} \right).
\end{aligned}
\end{equation}

\noindent
we only need to consider the second term $\left[T,\left[T, \frac{4 \tau^3}{\epsilon^2}A_1(x) \sigma_3 \partial_x \right]\right]$.
Define $ c(x) = A_1(x) \sigma_3 \partial_x $, then 

\begin{equation}
\begin{aligned}
& \left[T,\left[T, \frac{4 \tau^3}{\epsilon^2}A_1(x) \sigma_3 \partial_x \right]\right] \\
&= \frac{4 \tau^5}{\epsilon^2} \left[\left( -\sigma_1 \, \partial_x - \frac{i}{\epsilon} \sigma_3 \right), \left[\left( -\sigma_1 \, \partial_x - \frac{i}{\epsilon} \sigma_3 \right), c(x) \right]\right] \\
&= \frac{4 \tau^5}{\epsilon^2} \left[\left( \sigma_1 \ \partial_x + \frac{i}{\epsilon} \sigma_3 \right), \partial_x A_1(x) \sigma_1 \sigma_3 + 3 A_1(x) \sigma_1 \sigma_3 \partial_{xx} \right] \\
&= \frac{4 \tau^5}{\epsilon^2}
\Bigl[
\partial_{xx} A_1(x)\sigma_3
- \frac{2i}{\epsilon}\partial_x A_1(x)\sigma_1
+ 2\partial_x A_1(x)\sigma_3 \partial_x
\\
&\qquad\qquad
+ 3\partial_x A_1(x)\sigma_3 \partial_{xx}
- \frac{6i}{\epsilon} A_1(x)\sigma_1 \partial_{xx}
+ 6A_1(x)\sigma_3 \partial_{xxx}
\Bigr].
\end{aligned}
\end{equation}
\noindent
Since $\partial_{xxx}$ occurs exclusively in this commutator, and $\left\| \frac{24 \tau^5}{\epsilon^2} A_1(x) \sigma_3 \partial_{xxx} \Phi(t_n, \cdot) \right\|_{L^2}  = O \left( \frac{\tau^5}{\epsilon^5} \right)$, $[T,[T,[T, [T, W]]]]\Phi(t_n, \cdot) $ gives rise to a fifth-order error that remains uncanceled by other commutators. Therefore, \eqref{eq3-lemma-splitting-error} holds.     \hfill $\square$

\end{proof}

\noindent 
Next, we introduce an error estimate for the Fourier interpolation.

\begin{lemma} \label{lemma-fourier-error}
    For a sufficiently smooth function $f(x) \in W^{k,\infty}_p(\Omega)$, $k \geq 1$, let $f_I$ be the Fourier interpolant of $f$ with the grid parameter $M$, then the approximation error between $f$ and $f_I$ is given by:
    \begin{align*}
        \left\| f - f_I \right\|_{L^2} \leq \frac{\bar{C}_k}{M^k}  \left\| \partial^k_x f \right\|_{L^2},
    \end{align*}
    \noindent
    where $\| \cdot\|_{L^2}$ is the standard $L^2$-norm of space $L^2(\Omega)$ and $\bar{C}_k \geq 0$ is a constant depending on $f$ and $k$.
\end{lemma}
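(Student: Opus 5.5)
We will prove Lemma~\ref{lemma-fourier-error} by the standard aliasing argument for trigonometric interpolation on the uniform grid $x_j=a+jh$. Since $f\in W^{k,\infty}_p(\Omega)\subset H^k_p(\Omega)$ with $k\ge 1$, $f$ is continuous and its Fourier series $f(x)=\sum_{l\in\mathbb{Z}}\tilde f_l e^{i\mu_l(x-a)}$ converges absolutely, where $\tilde f_l=\frac{1}{b-a}\int_\Omega f(x)e^{-i\mu_l(x-a)}dx$ are the exact Fourier coefficients and $\mu_l=\frac{2\pi l}{b-a}$ as in \eqref{def_mu_l}.

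\textbf{Aliasing identity.} Substituting the series into the discrete coefficients in \eqref{fourier interpolation function} and using that $\frac1M\sum_{j=0}^{M-1}e^{i(\mu_p-\mu_l)(x_j-a)}$ equals $1$ if $p\equiv l \pmod M$ and $0$ otherwise, we obtain for $-\frac M2\le l\le \frac M2-1$
\[
\hat f_l=\tilde f_l+\sum_{q\neq 0}\tilde f_{l+qM}.
\]

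\textbf{Splitting the error.} With $P_Mf=\sum_{l=-M/2}^{M/2-1}\tilde f_l e^{i\mu_l(x-a)}$, write $f-f_I=(f-P_Mf)+(P_Mf-f_I)$. The two pieces are $L^2$-orthogonal, and Parseval gives
\[
\|f-f_I\|_{L^2}^2=(b-a)\Big[\sum_{l\notin[-M/2,M/2-1]}|\tilde f_l|^2+\sum_{l=-M/2}^{M/2-1}\Big|\sum_{q\ne0}\tilde f_{l+qM}\Big|^2\Big].
\]

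\textbf{Bounding the two sums.} Integration by parts, which is legitimate because of the periodicity of $\partial_x^{s}f$ for $s\le k-1$, yields $\tilde f_p=(i\mu_p)^{-k}\widetilde{(\partial_x^kf)}_p$ for $p\ne0$. For the truncation sum, $|p|\ge M/2$ gives $|\mu_p|\ge\pi M/(b-a)$, so
\[
\sum_{|p|\ge M/2}|\tilde f_p|^2\le \Big(\frac{b-a}{\pi M}\Big)^{2k}\frac{1}{b-a}\|\partial_x^kf\|_{L^2}^2.
\]
For the aliasing sum, apply Cauchy--Schwarz with weights $|\mu_{l+qM}|^{-2k}$:
\[
\Big|\sum_{q\ne0}\tilde f_{l+qM}\Big|^2\le\Big(\sum_{q\neq0}|\mu_{l+qM}|^{-2k}\Big)\Big(\sum_{q\ne0}|\mu_{l+qM}|^{2k}|\tilde f_{l+qM}|^2\Big).
\]
Since $|l+qM|\ge(|q|-\tfrac12)M$, the first factor is bounded by $c_k\big(\frac{b-a}{2\pi M}\big)^{2k}$, with $c_k=2\sum_{q\ge1}(q-\tfrac12)^{-2k}$ finite for $k\ge1$. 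Summing over $l$, each frequency $p$ with $|p|\ge M/2$ is counted at most once, so the total is again bounded by a multiple of $M^{-2k}\|\partial_x^kf\|_{L^2}^2$. Combining the two estimates gives the claim with $\bar C_k$ depending only on $k$ and $b-a$, which is consistent with the statement.

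\textbf{Main obstacle.} The main difficulty is the aliasing sum: one has to check that the weighted series converges uniformly in $l$ when $k\ge1$, and that the pointwise interpolation in $\hat f_l$ is justified. The latter follows from absolute convergence of the Fourier series, since $\sum_p|\tilde f_p|\le|\tilde f_0|+(\sum_{p\neq 0}|\mu_p|^{-2})^{1/2}\|\partial_xf\|_{L^2}$ when $k\ge1$.
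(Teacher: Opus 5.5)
Your proof is correct. The paper gives no argument for this lemma and simply cites Pasciak (1980). What you wrote is the standard aliasing proof behind such references:
\begin{itemize}
\item the identity $\hat f_l=\sum_{q}\tilde f_{l+qM}$;
\item the $L^2$-orthogonal split into a truncation part and an aliasing part;
\item $\tilde f_p=(i\mu_p)^{-k}\widetilde{(\partial_x^k f)}_p$, obtained by integration by parts using periodicity of $\partial_x^s f$ for $s\le k-1$;
\item the weighted Cauchy--Schwarz bound, using $|l+qM|\ge(|q|-\tfrac12)M$ and the injectivity of $(l,q)\mapsto l+qM$ onto the frequencies outside $[-M/2,M/2-1]$.
\end{itemize}
All of these steps check out, including convergence of your $c_k$ for $k\ge1$ and the pointwise validity of the aliasing identity via absolute convergence.

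What your version adds over the citation is an explicit constant,
$\bar C_k=\bigl(\tfrac{b-a}{\pi}\bigr)^k\bigl(1+4^{-k}c_k\bigr)^{1/2}$.
It depends only on $k$ and $b-a$, not on $f$, and you only need $f\in H^k_p(\Omega)$ rather than $W^{k,\infty}_p(\Omega)$. This is stronger than the lemma as stated, and it is the form the paper actually needs. In the proof of Theorem~\ref{thm-error-fun-sp1} the lemma is applied to $\Gamma_n$ and $\Phi(t_n,\cdot)$, which depend on $\epsilon$. A constant ``depending on $f$'' could therefore hide $\epsilon$-dependence; yours cannot.
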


\noindent
We refer the proof of the above Lemma \ref{lemma-fourier-error} to \cite{spectral-error} and the details are omitted here for brevity.\\

\noindent
We are now ready to prove Theorem \ref{thm-error-fun-sp1}. The proof of Theorems~\ref{thm-error-fun-sp2} and~\ref{thm-error-fun-S4c} are similar, and thus omitted here.\\

\noindent
\textit{Proof for Theorem \ref{thm-error-fun-sp1}}

\noindent
For convenience, we introduce the auxiliary functions
\[
\Gamma_n(\cdot)=e^{\tau W}e^{\tau T}\Phi(t_n,\cdot),
\qquad
\gamma_n(\cdot)=e^{\tau T}\Phi(t_n,\cdot).
\]
Here, $\Gamma_n$ denotes one exact splitting step starting from the exact solution at time $t_n$. 
Consequently, the difference between $\Phi(t_{n+1},\cdot)$ and $\Gamma_n(\cdot)$ measures the local error generated by the time-splitting procedure. 
By Lemma~\ref{lemma-splitting-error}, this local truncation error satisfies
\begin{equation}
\label{fisrt_term_thm_proof}
    \|\Phi(t_{n+1},\cdot)-\Gamma_n(\cdot)\|_{L^2}
    \le
    C_1\frac{\tau^2}{\epsilon^2}.
\end{equation}

\noindent
To derive the full discretization error estimate, we decompose the error into three different parts corresponding respectively to the temporal splitting error, the Fourier interpolation error, and the propagated numerical error. 
More precisely, by the triangle inequality,
\begin{align}
\label{error_decomposition}
    \|\Phi(t_{n+1},\cdot)-\Phi_I^{n+1}(\cdot)\|_{L^2}
    \le &
    \|\Phi(t_{n+1},\cdot)-\Gamma_n(\cdot)\|_{L^2}
    \nonumber\\
    &
    +\|\Gamma_n(\cdot)-\Gamma_{n,I}(\cdot)\|_{L^2}
    +\|\Gamma_{n,I}(\cdot)-\Phi_I^{n+1}(\cdot)\|_{L^2}.
\end{align}

\noindent
We first estimate the interpolation error term. 
Applying Lemma~\ref{lemma-fourier-error}, we have
\[
    \|\Gamma_n(\cdot)-\Gamma_{n,I}(\cdot)\|_{L^2}
    \lesssim
    M^{-m}\|\partial_x^m\Gamma_n(\cdot)\|_{L^2}.
\]
Therefore, it remains to control the $m$-th order spatial derivative of $\Gamma_n$. 
Recalling the definition of $\Gamma_n$ and using Leibniz's rule, we obtain
\begin{align*}
    \left\|\partial_x^m\Gamma_n(\cdot) \right\|_{L^2}
    &=
    \left\|
    \partial_x^m
    \left(
    e^{\tau W}e^{\tau T}\Phi(t_n,\cdot)
    \right)
    \right\|_{L^2}
    \\
    &\le
    \sum_{q=0}^m
    \binom{m}{q}
    \left\|\partial_x^{m-q}e^{\tau W}\right\|_{L^\infty}
    \,
    \left\|\partial_x^q(e^{\tau T}\Phi(t_n,\cdot))\right\|_{L^2}.
\end{align*}
where $\left\|\partial_x^{m-q}e^{\tau W}\right\|_{L^\infty}$ represents the standard matrix $L^{\infty}$-norm of $\partial_x^{m-q}e^{\tau W}$.
Under the regularity assumptions imposed on $W$ and the exact solution $\Phi$, each term on the right-hand side is bounded, which leads to
\[
    \|\partial_x^m\Gamma_n(\cdot)\|_{L^2}
    \lesssim
    \epsilon^{-m}.
\]
Substituting the above estimate into the Fourier interpolation inequality yields
\begin{equation}
\label{second_term_thm_proof}
    \|\Gamma_n(\cdot)-\Gamma_{n,I}(\cdot)\|_{L^2}
    \lesssim
    \left(
    \frac{h}{\epsilon(b-a)}
    \right)^m.
\end{equation}

\noindent
Next, we turn to the last term in \eqref{error_decomposition}, which corresponds to the propagation of the numerical error from the previous time level. 
To simplify the notation, define
\[
\Gamma^n=
(\Gamma_n(x_0),\ldots,\Gamma_n(x_{M-1})),
\qquad
\gamma^n=
(\gamma_n(x_0),\ldots,\gamma_n(x_{M-1})),
\]
and
\[
\Phi_{\mathrm{exact}}^n=
(\Phi(t_n,x_0),\ldots,\Phi(t_n,x_{M-1})).
\]
Using the definition of the numerical scheme together with Parseval's identity, we derive
\begin{align}
\label{third_term_thm_proof}
    \|\Gamma_{n,I}(\cdot)-\Phi_I^{n+1}(\cdot)\|_{L^2}
    &=
    \|\Gamma^n-\Phi^{n+1}\|_{l^2}
    \nonumber\\
    &=
    \|\gamma^n-\Phi^{n,*}\|_{l^2}
    =
    \|\Phi_{\mathrm{exact}}^n-\Phi^n\|_{l^2}
    \nonumber\\
    &=
    \|\Phi_I(t_n,\cdot)-\Phi_I^n(\cdot)\|_{L^2}.
\end{align}

\noindent
The above identity shows that the error at time $t = t_{n+1}$ can be controlled by the numerical error at the previous step. 
Applying the triangle inequality once again gives
\[
    \|\Phi_I(t_n,\cdot)-\Phi_I^n(\cdot)\|_{L^2}
    \le
    \|\Phi_I(t_n,\cdot)-\Phi(t_n,\cdot)\|_{L^2}
    +
    \|\Phi(t_n,\cdot)-\Phi_I^n(\cdot)\|_{L^2}.
\]
Moreover, Lemma~\ref{lemma-fourier-error} implies that
\[
    \|\Phi_I(t_n,\cdot)-\Phi(t_n,\cdot)\|_{L^2}
    \lesssim
    \left(
    \frac{h}{\epsilon(b-a)}
    \right)^m.
\]

\noindent
Combining the estimates
\eqref{fisrt_term_thm_proof},
\eqref{second_term_thm_proof},
and
\eqref{third_term_thm_proof},
we arrive at the recursive inequality
\[
    \|\Phi(t_{n+1},\cdot)-\Phi_I^{n+1}(\cdot)\|_{L^2}
    \le
    C_1\frac{\tau^2}{\epsilon^2}
    +
    E_{m,1}
    \left(
    \frac{h}{\epsilon(b-a)}
    \right)^m
    +
    \|\Phi(t_n,\cdot)-\Phi_I^n(\cdot)\|_{L^2}.
\]
Finally, applying induction with respect to $n$ immediately yields the desired estimate \eqref{equ-thm-error-fun-sp1}.     \hfill $\square$

\noindent
Since the control coefficients of the splitting error and the Fourier interpolation error are independent of $\epsilon$, the control coefficients of \eqref{equ-thm-error-fun-sp1},\eqref{equ-thm-error-fun-sp2},\eqref{equ-thm-error-fun-s4c} are also independent of $\epsilon$.

\section{Numerical Results} \label{section4}

\noindent
In this section, we conduct the numerical experiments of the time-splitting spectral methods for the semiclassical Dirac equation to verify the error estimates given in Section~\ref{section3}. To evaluate the numerical errors, we use the following relative errors of the wave function $\Phi$, the total probability density function $\rho$ and the current density function $\textbf{J}$: 

\begin{align}
    & e_{\Phi}^r (t_n) = \frac{\left\| \Phi(t_n, \cdot) - \Phi_I^n(\cdot) \right\|_{l^2}}{\left\| \Phi(t_n, \cdot) \right\|_{l^2}},  \label{relative-error-fun}  \\
    & e_{\rho}^r (t_n) = \frac{\left\| \rho(t_n, \cdot) - \rho_I^n(\cdot) \right\|_{l^2}}{\left\| \rho(t_n, \cdot) \right\|_{l^2}}, \label{relative-error-position} \\
    & e_{\textbf{J}}^r (t_n) = \frac{\left\| \textbf{J} (t_n, \cdot) - \textbf{J}_I^n(\cdot) \right\|_{l^2}}{\left\| \textbf{J}(t_n, \cdot) \right\|_{l^2}}, \label{relative-error-curr}
\end{align}
\noindent
where the total probability density function $\rho$ and the current density function $\textbf{J}$ are defined in \eqref{def_total_probability_density} and \eqref{def_ current density}.

\subsection{An example in 1D}
\noindent
First, we consider a one-dimensional case with $d = 1$. The electromagnetic potentials are given by $V(x) = \frac{1 - x}{1 + x^2}, A_1(x) = \frac{(x + 1)^2}{1 + x^2}$ and the initial condition is chosen as follows:

\begin{align}
    & \phi_1(0,x) = \frac{1}{2} e^{-4x^2 + i \frac{S_0(x)}{\epsilon}} \left( 1 + \sqrt{1 + S_0'(x)^2} \right), \\
    & \phi_2(0,x) = \frac{1}{2} e^{-4x^2 + i \frac{S_0(x)}{\epsilon}}  S_0'(x),
\end{align}
\noindent
where $S_0(x) = \frac{1}{40} \left(1 + \cos(2 \pi x) \right)$ and $\epsilon \in (0,1]$.

\noindent
In our numerical experiments, we take the spatial domain $\Omega = (-16,16)$ and we compute the solutions until $t = 2$. Moreover, as there is no explicit expression for the exact solution, we use the numerical solution obtained via the $S_\text{4c}$ method with time step size $\tau = \frac{0.1}{4^{8}}$ and spatial grid size $h = \frac{1}{2^{10}}$ as the ``exact'' solution for comparison.

\noindent
Firstly, we test the dependence of the error on time step sizes and present the relative errors of $S_1$ and $S_2$ in Tables \ref{tab:error_fun_tau_$S_1$} and \ref{tab:error_fun_tau_$S_2$}. Tables \ref{tab:error_fun_tau_$S_1$} and \ref{tab:error_fun_tau_$S_2$} clearly show the first order convergence in time for $S_1$ and the second order convergence in time for $S_2$, when the spatial error is negligible. Moreover, the elements of the diagonal lines of the two tables exhibit that the $\epsilon$-scalability of $S_1$ and $S_2$ are $\tau = O(\epsilon^2)$ and $\tau = O(\epsilon^{\frac{3}{2}})$, respectively.

\begin{table}[htbp]
\centering
\caption{Discrete $l^2$ relative temporal errors $e^r_{\Phi}(t = 2)$ of $S_1$ under different $\tau$ and $\epsilon$ for the Dirac equation \eqref{def_Dirac_simplified} in 1D.}
\label{tab:error_fun_tau_$S_1$}
\begin{tabular}{l *{6}{S[table-format=1.2e-1]}}
\toprule
\multirow{2}{*}{$e^r_{\Phi}(t = 2)$} 
& \multicolumn{1}{l}{\makecell{$\tau_0=0.1$ \\ $h_0=1/2^{10}$ }}
& \multicolumn{1}{l}{\makecell{$\tau_0/4$} }
& \multicolumn{1}{l}{\makecell{$\tau_0/4^2$ } } 
& \multicolumn{1}{l}{\makecell{$\tau_0/4^3$ } }
& \multicolumn{1}{l}{\makecell{$\tau_0/4^4$} } 
& \multicolumn{1}{l}{\makecell{$\tau_0/4^5$} }\\
 \cmidrule(lr){2-2} \cmidrule(lr){3-3} \cmidrule(lr){4-4} \cmidrule(lr){5-5} \cmidrule(l){6-6} \cmidrule(l){7-7}
$\epsilon_0=1$       &0.096048069366561	&0.023713304251863	&0.005909816661789	&0.001476302045016	&0.000369003588005	&0.000092246405849
	\\
    {order}             &\multicolumn{1}{c}{-} &1.009029955305683	&1.002255712206628	&1.000562727049326	&1.000140585630060	&1.000035119010315
\\
\midrule
$\epsilon_0/2$            &0.217568260980917	&0.050026637270193	&0.012247290620450	&0.003045926323150	&0.000760493292795	&0.000190061638516

\\
{order}           &\multicolumn{1}{c}{-} &1.060349863011445	&1.015116925924287	&1.003754838509917	&1.000936808097813	&1.000234076030059
\\

\midrule
$\epsilon_0/2^2$            &0.486073740730603	&0.088994967686216	&0.020360380839638	&0.004978618448642	&0.001237802611769	&0.000309024253552

\\
{order}             &\multicolumn{1}{c}{-} &1.224689766201960	&1.063979606247818	&1.015973593847084	&1.003982091612305	&1.000994647278682
\\

\midrule
$\epsilon_0/2^3$              &0.943664921734977	&0.199144168507637	&0.035191660391577	&0.007998839048507	&0.001952667178534	&0.000485277226218

 \\
{order}              &\multicolumn{1}{c}{-}   &1.122230720753122	&1.250253872524423	&1.068685528253053	&1.017172275878272	&1.004282504364117
\\
\midrule
$\epsilon_0/2^4$            &1.451949246876376	&0.710246167592942	&0.095454720923218	&0.016603481560620	&0.003745531740630	&0.000912274966021

 \\
{order}              &\multicolumn{1}{c}{-}   &\multicolumn{1}{l}{0.52}	&1.447715342827507	&1.261665380534816	&1.074121668524360	&1.018814957225517
\\
\midrule
$\epsilon_0/2^5$            &1.334557081683848	&1.192495431268675	&0.412763172973088	&0.046259010383953	&0.007974621295882	&0.001792368602604

 \\
{order}              &\multicolumn{1}{c}{-}   &\multicolumn{1}{l}{0.08}	&\multicolumn{1}{l}{0.77}	&1.578753975140230	&1.268123244432980	&1.076774324545238
\\

\bottomrule
\end{tabular}

\end{table}

\begin{table}[htbp]
\centering
\caption{Discrete $l^2$ relative temporal errors $e^r_{\Phi}(t = 2)$ of $S_2$ under different $\tau$ and $\epsilon$ for the Dirac equation \eqref{def_Dirac_simplified} in 1D.}
\label{tab:error_fun_tau_$S_2$}
\begin{tabular}{l *{6}{S[table-format=1.2e-1]}}
\toprule
\multirow{2}{*}{$e^r_{\Phi}(t = 2)$} 
& \multicolumn{1}{l}{\makecell{$\tau_0=0.1/4$ \\ $h_0=1/2^{10}$ }}
& \multicolumn{1}{l}{\makecell{$\tau_0/4$} }
& \multicolumn{1}{l}{\makecell{$\tau_0/4^2$ } } 
& \multicolumn{1}{l}{\makecell{$\tau_0/4^3$ } }
& \multicolumn{1}{l}{\makecell{$\tau_0/4^4$} } 
& \multicolumn{1}{l}{\makecell{$\tau_0/4^5$} }\\
\cmidrule(lr){2-2} \cmidrule(lr){3-3} \cmidrule(lr){4-4} \cmidrule(lr){5-5} \cmidrule(l){6-6} \cmidrule(l){7-7}

$\epsilon_0=1$       &3.144415512967116e-04	&1.965192811441731e-05	&1.228252768691590e-06	&7.677857729757565e-08	&4.856977275575182e-09	&7.391775746406983e-10

 \\
{order}             &\multicolumn{1}{c}{-} &2.000024550213926	&1.999995735316167	&1.999879926784160	&1.991286592926328	&1.358032921368678
\\

\midrule
$\epsilon_0/4^{2/3}$            &0.004046882538900	&0.000253073317691	&0.000015817636717	&0.000000988603192	&0.000000061790070	&0.000000003919100

\\
{order}           &\multicolumn{1}{c}{-} &1.999591831266073	&1.999974718466250	&1.999999345200737	&1.999972329251264	&1.989390263848456

\\

\midrule
$\epsilon_0/4^{4/3}$            &0.048066940518475	&0.003010328451840	&0.000188160162771	&0.000011760059583	&0.000000734998787	&0.000000045937033

\\
{order}             &\multicolumn{1}{c}{-} &1.998526083768575	&1.999943893580050	&1.999996969232388	&2.000004845466152	&2.000006139357000

\\

\midrule
$\epsilon_0/4^2$              &0.571677017113967	&0.044772153010597	&0.002802749880465	&0.000175180778303	&0.000010948804034	&0.000000684273971

 \\
{order}              &\multicolumn{1}{c}{-}   &1.837263344905292	&1.998843394220627	&1.999963306887665	&1.999999644850004	&2.000027704119271

\\

\midrule
$\epsilon_0/4^{8/3}$            &1.449283435969003	&0.564855011992282	&0.044097987199231	&0.002759337307017	&0.000172462626057	&0.000010778911447

 \\
{order}              &\multicolumn{1}{c}{-} &\multicolumn{1}{l}{0.68}	&1.839547944594967	&1.999159538300181	&1.999983083702146	&2.000000179422990

\\

\midrule
$\epsilon_0/4^{10/3}$            &1.418049967289920	&1.407600186375704	&0.565104180664426	&0.044009937229629	&0.002753313356493	&0.000172084133693

 \\
{order}              &\multicolumn{1}{c}{-}   &\multicolumn{1}{l}{0.01}	&\multicolumn{1}{l}{0.66}	&1.841307821591548	&1.999294297771113	&1.999991411258088
\\

\bottomrule
\end{tabular}
\end{table}

\begin{table}[htbp] 
\centering
\caption{Discrete $l^2$ relative temporal errors $e^r_{\Phi}(t = 2)$ of $S_\text{4c}$ under different $\tau$ and $\epsilon$ for the Dirac equation \eqref{def_Dirac_simplified} in 1D.}
\label{tab:error_fun_tau_S4c}
\begin{tabular}{l *{6}{S[table-format=1.2e-1]}}
\toprule
\multirow{2}{*}{$e^r_{\Phi}(t = 2)$} 
& \multicolumn{1}{l}{\makecell{$\tau_0=0.1/4$  }}
& \multicolumn{1}{l}{\makecell{$\tau_0/4$} }
& \multicolumn{1}{l}{\makecell{$\tau_0/4^2$ } } 
& \multicolumn{1}{l}{\makecell{$\tau_0/4^3$ } }
& \multicolumn{1}{l}{\makecell{$\tau_0/4^4$} } 
& \multicolumn{1}{l}{\makecell{$\tau_0/4^5$} }\\
\cmidrule(lr){2-2} \cmidrule(lr){3-3} \cmidrule(lr){4-4} \cmidrule(lr){5-5} \cmidrule(l){6-6} \cmidrule(l){7-7}

$\epsilon_0 = 1/4^{4/5}$            &3.588985314539769e-06	&1.403036727949944e-08	&6.927227104318929e-10	&6.883146949190195e-10	&6.800459933980934e-10	&6.499003099701679e-10

\\
{order}             &\multicolumn{1}{c}{-} &3.999439717072379	&2.170065496484900	&\multicolumn{1}{l}{0}	&\multicolumn{1}{l}{0.01}	&\multicolumn{1}{l}{0.03}

\\

\midrule

$\epsilon_0/4^{4/5}$            &4.331690752119641e-04	&1.682821435617751e-06	&6.637855560267670e-09	&6.810401792777856e-10	&6.713204397919403e-10	&6.393265903746217e-10

\\
{order}             &\multicolumn{1}{c}{-} &4.003952169663850	&3.992974574097295	&1.642452708139875	&\multicolumn{1}{l}{0.01}	&\multicolumn{1}{l}{0.04}

\\

\midrule
$\epsilon_0/4^{8/5}$              &0.095837738076884	&0.000353447611445	&0.000001376074617	&0.000000005408022	&0.000000000650725	&0.000000000618273

 \\
{order}              &\multicolumn{1}{c}{-}   &4.041476828013084	&4.002396940495247	&3.995620946895298	&1.527490361592509	&\multicolumn{1}{l}{0.04}

\\

\midrule
$\epsilon_0/4^{12/5}$            &1.291441223223771	&0.091947897968323	&0.000349173697547	&0.000001361360196	&0.000000005348780	&0.000000000584974

 \\
{order}              &\multicolumn{1}{c}{-}   &1.906010790304366	&4.020363947507009	&4.001376122368487	&3.995811663093173	&1.596383136731600

\\

\midrule
$\epsilon_0/4^{16/5}$            &1.419932978466424	&1.181880734239523	&0.090457013213404	&0.000348787405875	&0.000001360973920	&0.000000005344422

 \\
{order}              &\multicolumn{1}{c}{-}   &\multicolumn{1}{l}{0.13}	&1.853854143499580	&4.009370301313804	&4.000782358400063	&3.996194938151118

\\

\midrule
$\epsilon_0/4^4$            &1.418658871683228	&1.415267316774206	&1.116143831199114	&0.089675907911070	&0.000348607216425	&0.000001360876852

 \\

{order}              &\multicolumn{1}{c}{-}   &\multicolumn{1}{l}{0}	&\multicolumn{1}{l}{0.17}	&1.818829347162813	&4.003487100886877	&4.000461051559708
\\

\midrule
$\epsilon_0/4^{24/5}$            &1.413986777607456	&1.410929417297270	&1.416350656777041	&1.061846705500867	&0.031205512574174	&0.000124465824532

 \\

{order}           &\multicolumn{1}{c}{-} &\multicolumn{1}{l}{0}	&\multicolumn{1}{l}{0}	&\multicolumn{1}{l}{0.21}	&2.544315392838269	&3.984953716704456
\\

\bottomrule
\end{tabular}
\end{table}

\noindent
In addition, the relative error of the wave function obtained via the $S_\text{4c}$ method in Table \eqref{tab:error_fun_tau_S4c} demonstrates the fourth-order accuracy in time. Compared to $S_1$ and $S_2$, $S_\text{4c}$ achieves higher accuracy with less stringent time step size requirements owing to the higher $\epsilon$-scalability.

\noindent
Furthermore, we compute the relative temporal errors of the total probability density $e^r_{\rho}$ and the current density $e^r_{\textbf{J}}$ for $S_1$ and $S_2$. For simplicity, we omit the results of $S_2$ here. The data of Table \eqref{tab:error_position_tau_$S_1$} and \eqref{tab:error_curr_tau_$S_1$} confirms the error bounds in Corollary \eqref{corollary-sp1}.

\begin{table}[htbp]
\vspace{10pt}
\centering
\caption{Discrete $l^2$ relative temporal errors $e^r_{\rho}(t = 2)$ of $S_{1}$ under different $\tau$ and $\epsilon$ for the Dirac equation \eqref{def_Dirac_simplified} in 1D.}
\label{tab:error_position_tau_$S_1$}
\begin{tabular}{l *{6}{S[table-format=1.2e-1]}}
\toprule
\multirow{2}{*}{$e^r_{\rho}(t = 2)$} 
& \multicolumn{1}{l}{\makecell{$\tau_0=0.1$ }}
& \multicolumn{1}{l}{\makecell{$\tau_0/4$} }
& \multicolumn{1}{l}{\makecell{$\tau_0/4^2$ } } 
& \multicolumn{1}{l}{\makecell{$\tau_0/4^3$ } }
& \multicolumn{1}{l}{\makecell{$\tau_0/4^4$} } 
& \multicolumn{1}{l}{\makecell{$\tau_0/4^5$} }\\
 \cmidrule(lr){2-2} \cmidrule(lr){3-3} \cmidrule(lr){4-4} \cmidrule(lr){5-5} \cmidrule(l){6-6} \cmidrule(l){7-7}
$\epsilon_0=1$       &0.045539890889608	&0.010750171335365	&0.002650927057497	&0.000660493988877	&0.000164984679931	&0.000041237750414
\\

{order}             &\multicolumn{1}{c}{-} &1.041385590663461	&1.009895387333536	&1.002439818514095	&1.000606683016035	&1.000147263498727
\\
\midrule
$\epsilon_0/2$            &0.072019811609535	&0.014122894525347	&0.003309635993832	&0.000814094815194	&0.000202700213596	&0.000050623831667

\\

{order}           &\multicolumn{1}{c}{-} &1.175179011277318	&1.046645672646034	&1.011701908487546	&1.002924610806386	&1.000729498103903
\\

\midrule
$\epsilon_0/2^2$            &0.169458316496618	&0.024604419137675	&0.005228702416947	&0.001253767888764	&0.000310175529043	&0.000077341240449

\\

{order}             &\multicolumn{1}{c}{-} &1.391970539428897	&1.117196294325928	&1.030091339264861	&1.007556754610062	&1.001887531211056
\\

\midrule
$\epsilon_0/2^3$              &0.362536086103019	&0.046795564567538	&0.008255586761282	&0.001873074091722	&0.000456956158888	&0.000113543144108

 \\

{order}              &\multicolumn{1}{c}{-}   &1.476840453511141	&1.251464565206472	&1.069981394636230	&1.017640152861510	&1.004407580337979
\\

\midrule
$\epsilon_0/2^4$            &1.197078846813399	&0.126599431984464	&0.014362082414649	&0.002536163019603	&0.000572608294333	&0.000139425795690

 \\

{order}              &\multicolumn{1}{c}{-}   &1.620587671526008	&1.569967040184086	&1.250772779575126	&1.073513504389930	&1.019025531925884
\\

\midrule
$\epsilon_0/2^5$            &0.985634411277876	&0.704249856473499	&0.061131584006285	&0.006943465891570	&0.001231380128008	&0.000277253187628

 \\

{order}              &\multicolumn{1}{c}{-}   &\multicolumn{1}{l}{0.24}	&1.763048754799060	&1.569095034173053	&1.247689892371584	&1.075500119577001
\\

\bottomrule
\end{tabular}
\vspace{10pt}
\end{table}

\begin{table}[htbp]
\vspace{10pt}
\centering
\caption{Discrete $l^2$ relative temporal errors $e^r_{J}(t = 2)$ of $S_{1}$ under different $\tau$ and $\epsilon$ for the Dirac equation \eqref{def_Dirac_simplified} in 1D.}
\label{tab:error_curr_tau_$S_1$}
\begin{tabular}{l *{6}{S[table-format=1.2e-1]}}
\toprule
\multirow{2}{*}{$e^r_{\textbf{J}}(t = 2)$} 
& \multicolumn{1}{l}{\makecell{$\tau_0=0.1$ \\ $h_0=1/2^{10}$ }}
& \multicolumn{1}{l}{\makecell{$\tau_0/4$} }
& \multicolumn{1}{l}{\makecell{$\tau_0/4^2$ } } 
& \multicolumn{1}{l}{\makecell{$\tau_0/4^3$ } }
& \multicolumn{1}{l}{\makecell{$\tau_0/4^4$} } 
& \multicolumn{1}{l}{\makecell{$\tau_0/4^5$} }\\
 \cmidrule(lr){2-2} \cmidrule(lr){3-3} \cmidrule(lr){4-4} \cmidrule(lr){5-5} \cmidrule(l){6-6} \cmidrule(l){7-7}
$\epsilon_0=1$       &0.066753700387697	&0.015814120722082	&0.003901747405824	&0.000972251475968	&0.000242864856530	&0.000060704108405

	\\
{order}             &\multicolumn{1}{c}{-} &1.038817234751467	&1.009510527618141	&1.002359478902319	&1.000587889098714	&1.000143838139084
\\
\midrule
$\epsilon_0/2$            &0.115086287577867	&0.022686826517850	&0.005329207220075	&0.001311791289422	&0.000326681216677	&0.000081591457225

\\
{order}           &\multicolumn{1}{c}{-} &1.171394613507809	&1.044930989500373	&1.011191365664406	&1.002791394670613	&1.000696746765155
\\

\midrule
$\epsilon_0/2^2$            &0.311501102169513	&0.051269536203625	&0.011251622092889	&0.002719437459598	&0.000674103112334	&0.000168167635335

\\
{order}             &\multicolumn{1}{c}{-} &1.301531757965911	&1.093984421954364	&1.024376425039996	&1.006133528221281	&1.001534603711182
\\

\midrule
$\epsilon_0/2^3$              &0.465974003799915	&0.095957027541591	&0.017185306976403	&0.003899002554018	&0.000950692544403	&0.000236183767265

 \\
{order}              &\multicolumn{1}{c}{-}   &1.139894548787807	&1.240606422795721	&1.069999308763582	&1.018027174092598	&1.004534516840981
\\

\midrule
$\epsilon_0/2^4$            &1.287764692939789	&0.161738255937554	&0.023748912985686	&0.004004267072325	&0.000888323820606	&0.000215266228070

 \\
{order}              &\multicolumn{1}{c}{-}   &1.496568067979061	&1.383863786711558	&1.284125687680823	&1.086190308825080	&1.022481838577901
\\

\midrule
$\epsilon_0/2^5$            &0.992736950866205	&0.694593095224064	&0.085293598117317	&0.010105458070550	&0.001852399215342	&0.000422152123014

 \\
{order}              &\multicolumn{1}{c}{-}   &\multicolumn{1}{l}{0.26}	&1.512829351490850	&1.538651371034586	&1.223833881501067	&1.066780088225304
\\

\bottomrule
\end{tabular}
\vspace{10pt}
\end{table}

\noindent
And to make the dependence of various errors on time step sizes more explicit, we present the following convergence analysis plot \eqref{fig-$S_2$-ep1} and \eqref{fig-$S_2$-ep16} with the dimensionless parameter $\epsilon = 1$, $ \frac{1}{16}$ respectively. 

\begin{figure}[htbp]
    \centering

    \subfloat[$\epsilon = 1$]{
        \includegraphics[width=0.48\textwidth]{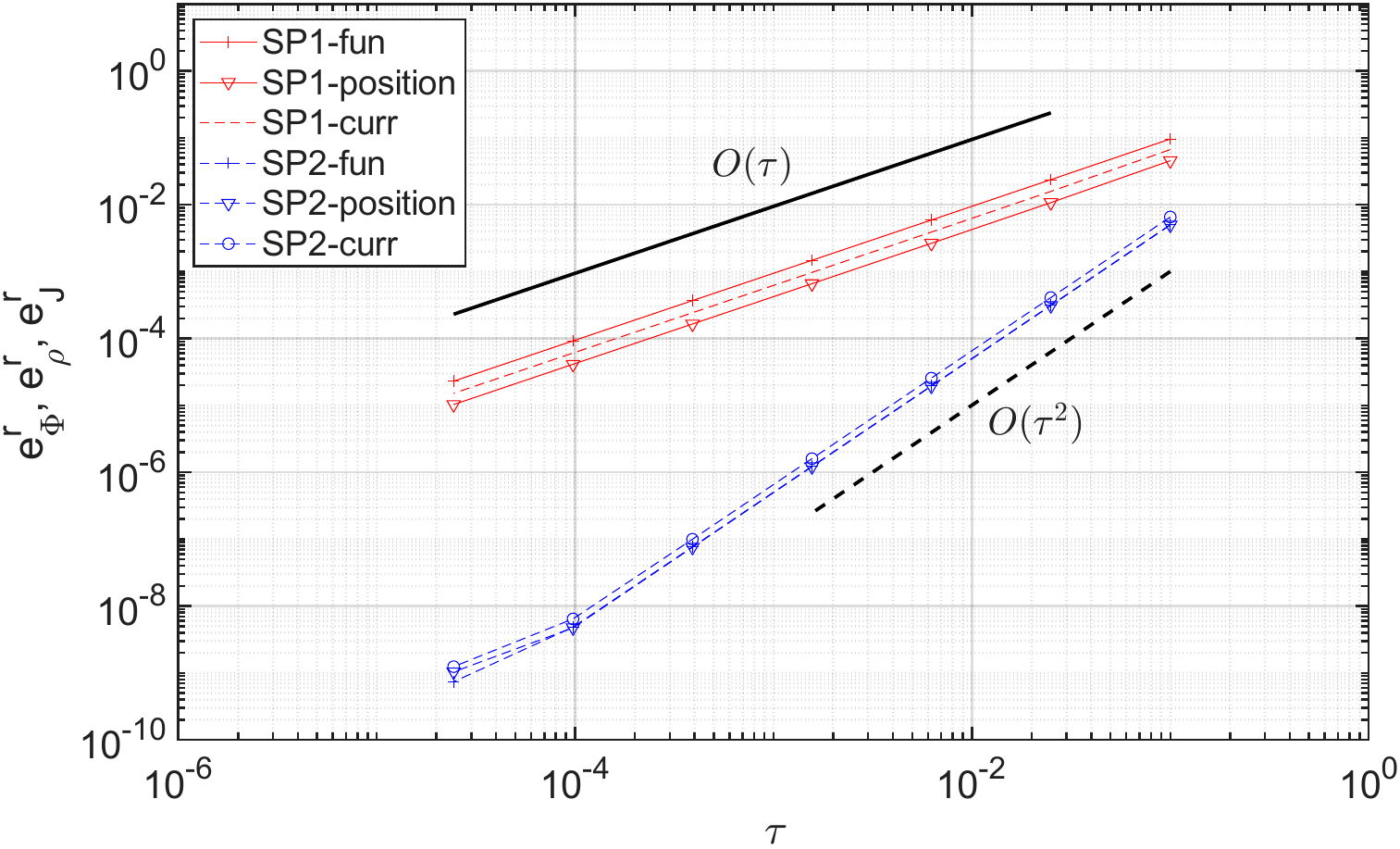}
        \label{fig-$S_2$-ep1}
    }
    \hfill
    \subfloat[$\epsilon = \frac{1}{16}$]{
        \includegraphics[width=0.48\textwidth]{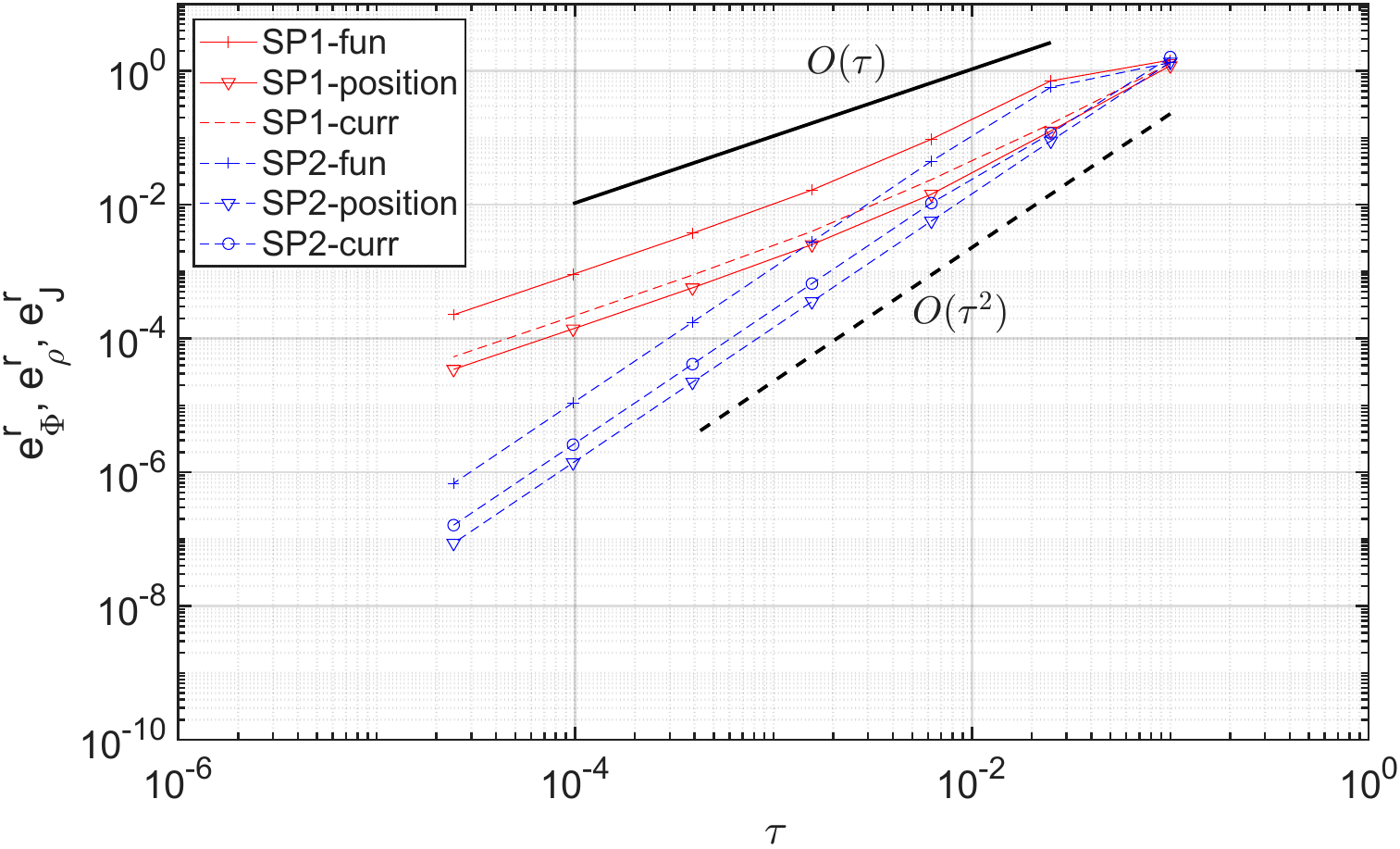}
        \label{fig-$S_2$-ep16}
    }

    \caption{Error comparison of $S_1$ and $S_2$ methods for the Dirac equation \eqref{def_Dirac_simplified} in 1D under different time step sizes.}
\end{figure}

\noindent
Figures~\ref{fig-$S_2$-ep1} and~\ref{fig-$S_2$-ep16}
display the relationships between the relative errors
\(
e_{\Phi}^r,\ e_{\rho}^r,\ \text{and}\ e_{J}^r
\)
and the time step size \(\tau\)
for \(\epsilon = 1\) and
\(\epsilon = \frac{1}{16}\), respectively.
It can be observed from these figures that
the relative errors
\(
e_{\Phi}^r,\ e_{\rho}^r,\ \text{and}\ e_{J}^r
\)
exhibit the same convergence order for different values of
\(\epsilon\),
indicating a consistent numerical behavior of the scheme
with respect to different physical quantities.

\noindent
Furthermore, the \(S_2\) method achieves a higher convergence rate
than the \(S_1\) method.
To attain an accuracy of order \(10^{-6}\),
smaller values of \(\epsilon\) require finer time step sizes \(\tau\).
In addition, the \(S_1\) method demands significantly smaller
time step sizes than the \(S_2\) method
to achieve the same level of accuracy,
demonstrating the superior efficiency and accuracy balance
of the \(S_2\) scheme.

\noindent
Finally, Table~\ref{tab:error_fun_h_$S_1$}
reports the spatial convergence behavior of the \(S_1\) method
with the time step size fixed at $\tau = \frac{0.1}{4^{8}}$.
It can be observed that, as the spatial mesh size \(h\) decreases,
the relative error exhibits a typical spectral convergence behavior.
This result confirms the high spatial accuracy of the proposed scheme.

\begin{table}[htbp]
\vspace{10pt}
\centering
\caption{Discrete $l^2$ relative spatial errors $e^r_{\Phi}(t = 2)$ of $S_{1}$ under different $h$ and $\epsilon$ for the Dirac equation \eqref{def_Dirac_simplified} in 1D.}
\label{tab:error_fun_h_$S_1$}
\begin{tabular}{l *{5}{S[table-format=1.2e-1]}}
\toprule
\multirow{2}{*}{$e^r_{\Phi}(t = 2)$} 
& \multicolumn{1}{l}{\makecell{$\tau_0= \frac{0.1}{4^{8}}$ \\ $h_0=1$ }}
& \multicolumn{1}{l}{\makecell{$h_0/2$} }
& \multicolumn{1}{l}{\makecell{$h_0/2^2$ } } 
& \multicolumn{1}{l}{\makecell{$h_0/2^3$ } }
& \multicolumn{1}{l}{\makecell{$h_0/2^4$} } \\
\cmidrule(lr){2-2} \cmidrule(lr){3-3} \cmidrule(lr){4-4} \cmidrule(lr){5-5} \cmidrule(l){6-6}

$\epsilon_0 = 1$            &0.929953054769695	&0.201962882505403	&0.009521839888527	&0.000006810490029	&0.000001441339675

 \\
 \midrule
 $\epsilon_0/2$            &0.996396303469822	&0.749107886574744	&0.053145655078847	&0.000088751420467	&0.000002969399845

 \\
 \midrule
 $\epsilon_0/2^2$            &1.282613750928035	&0.996910027792905	&0.512041745174147	&0.003810831166543	&0.000004826340150

 \\
 \midrule
$\epsilon_0/2^3$            &1.664836697610778	&1.255981451057587	&0.728854631122200	&0.275583248851465	&0.000020515007198

 \\
\bottomrule
\end{tabular}
\vspace{10pt}
\end{table}

\subsection{An example in 2D}
\noindent
We now consider an example in 2D with the electromagnetic potentials:
\begin{align}
    & V(\textbf{x}) = \cos \left( \frac{4 \pi}{\sqrt{3}} x_1 \right) + \cos \left[ \frac{4 \pi}{\sqrt{3}} \left( \frac{1}{2} x_1 + \frac{\sqrt{3}}{2} x_2 \right) \right] + \cos \left[ \frac{4 \pi}{\sqrt{3}} \left( \frac{1}{2} x_1 - \frac{\sqrt{3}}{2} x_2 \right) \right], \\
    & A_1(\textbf{x}) = 0 \quad, \qquad A_2(\textbf{x}) = x_1,
\end{align}
where $\textbf{x} = (x_1, x_2) \in \Omega = (-10,10) \times (-10,10)$. And the initial condition is given as follows:
\begin{equation}
\phi_1(0,\textbf{x}) = e^{-\frac{x_1^2 + x_2^2}{2}}, \quad \phi_2(0,\textbf{x}) = e^{-\frac{\left(x_1 - 1 \right)^2 + x_2^2}{2}}.
\end{equation}
 
Similar to the example in 1D, we use the numerical solution obtained via the $S_\text{4c}$ method with $\tau = \frac{0.1}{4^{5}}$ and $h = \frac{1}{32}$ as the ``exact'' solution. And we employ the relative errors introduced in \eqref{relative-error-fun} to evaluate the results.

\noindent
To exhibit the temporal error, we always take the mesh size $ h = \frac{1}{32}$ in the numerical experiments such that the spatial error could be neglected. The Table \ref{tab:error_fun_tau_$S_1$_2D} and \ref{tab:error_fun_tau_$S_2$_2D} present the temporal errors of $S_1$ and $S_2$. We can clearly identify that $S_1$ is first-order accurate in time, while $S_2$ achieves second-order accuracy. Moreover, the numerical results in the Table \eqref{tab:error_fun_tau_S4c_2D} demonstrate that the fourth-order accuracy is preserved in 2D, despite the introduction of additional errors arising from the implementation of the RK4 method.

\begin{table}[htbp]
\centering
\caption{Discrete $l^2$ relative temporal errors $e^r_{\Phi}(t = 0.2)$ of $S_{1}$ under different $h$ and $\epsilon$ for the Dirac equation \eqref{def_Dirac_simplified} in 2D.}
\label{tab:error_fun_tau_$S_1$_2D}
\begin{tabular}{l *{6}{S[table-format=1.2e-1]}}
\toprule
\multirow{2}{*}{$e^r_{\Phi}(t = 0.2)$} 
& \multicolumn{1}{l}{\makecell{$\tau_0=0.1$ \\ $h_x=h_y=1/2^{5}$ }}
& \multicolumn{1}{l}{\makecell{$\tau_0/4$} }
& \multicolumn{1}{l}{\makecell{$\tau_0/4^2$ } } 
& \multicolumn{1}{l}{\makecell{$\tau_0/4^3$ } }
& \multicolumn{1}{l}{\makecell{$\tau_0/4^4$} } 
& \multicolumn{1}{l}{\makecell{$\tau_0/4^5$} }\\
 \cmidrule(lr){2-2} \cmidrule(lr){3-3} \cmidrule(lr){4-4} \cmidrule(lr){5-5} \cmidrule(l){6-6} \cmidrule(l){7-7}
$\epsilon_0=1$       &0.084903029839769	&0.021091072222317	&0.005271458333811	&0.001317893766976	&0.000329477008277	&0.000082369502242	\\

{order}             &\multicolumn{1}{c}{-} &1.004591799101960	&1.000179200354726	&0.999984026257693	&0.999992191493750	&0.999997809116941
\\
\midrule
$\epsilon_0/2$            &0.176449171731901	&0.043825821994401	&0.010956319292118	&0.002739361781205	&0.000684862675445	&0.000171217130542

\\
{order}           &\multicolumn{1}{c}{-} &1.004699801272384	&1.000008967573395	&0.999925749019240	&0.999976585204735	&0.999993841831243
\\

\midrule
$\epsilon_0/2^2$            &0.367385142891962	&0.089945961290401	&0.022443860852467	&0.005609579691177	&0.001402328636173	&0.000350578334930

\\
{order}             &\multicolumn{1}{c}{-} &1.015081437988956	&1.001368815110557	&1.000178145148868	&1.000034096543619	&1.000007868426092
\\

\midrule
$\epsilon_0/2^3$              &0.688878071493023	&0.155219778785632	&0.037965934041951	&0.009448356071121	&0.002359540340532	&0.000589728098429
 \\
{order}              &\multicolumn{1}{c}{-}   &1.074968125413198	&1.015767498359072	&1.003285130804329	&1.000778749051569	&1.000191998482274
\\
\midrule
$\epsilon_0/2^4$            &1.322128205491601	&0.358138442479074	&0.078612323766420	&0.019133935985668	&0.004753844729211	&0.001186653599460
 \\
{order}              &\multicolumn{1}{c}{-}   &\multicolumn{1}{l}{0.94}	&1.093844992048895	&1.019310909736713	&1.004483494561442	&1.001097964328407
\\

\bottomrule
\end{tabular}

\end{table}

\begin{table}[htbp]
\centering
\caption{Discrete $l^2$ relative temporal errors $e^r_{\Phi}(t = 0.2)$ of $S_{2}$ under different $h$ and $\epsilon$ for the Dirac equation \eqref{def_Dirac_simplified} in 2D.}
\label{tab:error_fun_tau_$S_2$_2D}
\begin{tabular}{l *{6}{S[table-format=1.2e-1]}}
\toprule
\multirow{2}{*}{$e^r_{\Phi}(t = 0.2)$} 
& \multicolumn{1}{l}{\makecell{$\tau_0=0.1$ \\ $h_x=h_y=1/2^{5}$ }}
& \multicolumn{1}{l}{\makecell{$\tau_0/4$} }
& \multicolumn{1}{l}{\makecell{$\tau_0/4^2$ } } 
& \multicolumn{1}{l}{\makecell{$\tau_0/4^3$ } }
& \multicolumn{1}{l}{\makecell{$\tau_0/4^4$} } 
& \multicolumn{1}{l}{\makecell{$\tau_0/4^5$} }\\
 \cmidrule(lr){2-2} \cmidrule(lr){3-3} \cmidrule(lr){4-4} \cmidrule(lr){5-5} \cmidrule(l){6-6} \cmidrule(l){7-7}
$\epsilon_0=1$       &0.010588539113361	&0.000654255410691	&0.000040862240422	&0.000002553778002	&0.000000159610699	&0.000000009975673	\\
{order}             &\multicolumn{1}{c}{-} &2.008252897857259	&2.000506869010266	&2.000031642027781	&2.000001925217947	&1.999999663860311
\\
\midrule
$\epsilon_0/4^{\frac{2}{3}} $            &0.034174625402725	&0.002081378833625	&0.000129883274271	&0.000008116914481	&0.000000507304075	&0.000000031706487

\\
{order}           &\multicolumn{1}{c}{-} &2.018657021923595	&2.001126005247817	&2.000070217969804	&2.000004379343485	&2.000000406203295
\\

\midrule
$\epsilon_0/4^{\frac{4}{3}} $            &0.193461028878049	&0.010407916799986	&0.000643970933758	&0.000040223042650	&0.000002513842053	&0.000000157114748

\\
{order}             &\multicolumn{1}{c}{-} &2.108144868563074	&2.007270975906573	&2.000450724818045	&2.000028153001086	&2.000001746682796
\\

\midrule
$\epsilon_0/4^2$              &1.141213786476053	&0.120984356260193	&0.007117188744656	&0.000443111376171	&0.000027687778890	&0.000001730459288
 \\
{order}              &\multicolumn{1}{c}{-}   &1.618838330990825	&2.043684603624311	&2.002783112409317	&2.000174067759654	&2.000011210247900
\\
\midrule
$\epsilon_0/4^{\frac{8}{3}} $          &1.175858263141160	&1.082176671497658	&0.106177732522864	&0.006496749308825	&0.000405408585967	&0.000025313786564
 \\
{order}              &\multicolumn{1}{c}{-}   &\multicolumn{1}{l}{0.06}	&1.674691452001695	&2.015309695242468	&2.001134744153255	&2.000690704510041
\\

\bottomrule
\end{tabular}

\end{table}

\begin{table}[htbp]
\centering
\caption{Discrete $l^2$ relative temporal errors $e^r_{\Phi}(t = 0.2)$ of $S_\text{4c}$ under different $h$ and $\epsilon$ for the Dirac equation \eqref{def_Dirac_simplified} in 2D ($e_{\Phi}^r(t=0.2)$)}
\label{tab:error_fun_tau_S4c_2D}
\begin{tabular}{l *{6}{S[table-format=1.2e-1]}}
\toprule
\multirow{2}{*}{$e^r_{\Phi}(t = 0.2)$} 
& \multicolumn{1}{l}{\makecell{$\tau_0=0.1$ \\ $h_x=h_y=1/2^{5}$ }}
& \multicolumn{1}{l}{\makecell{$\tau_0/2$} }
& \multicolumn{1}{l}{\makecell{$\tau_0/2^2$ } } 
& \multicolumn{1}{l}{\makecell{$\tau_0/2^3$ } }
& \multicolumn{1}{l}{\makecell{$\tau_0/2^4$} } 
& \multicolumn{1}{l}{\makecell{$\tau_0/2^5$} }\\
 \cmidrule(lr){2-2} \cmidrule(lr){3-3} \cmidrule(lr){4-4} \cmidrule(lr){5-5} \cmidrule(l){6-6} \cmidrule(l){7-7}
$\epsilon_0=1$       &4.880905294362108e-05	&2.883457180125475e-06	&1.777528472754915e-07	&1.106915018924691e-08	&6.911031922499417e-10	&4.318945030418775e-11	\\
{order}             &\multicolumn{1}{c}{-} &4.081277255590376	&4.019855023786688	&4.005256297763113	&4.001499513187824	&4.000150283273003
\\
\midrule
$\epsilon_0/2^{\frac{4}{5}} $            &1.709688548204713e-04	&9.607416873433098e-06	&5.836444311342135e-07	&3.615305805758121e-08	&2.252333713280032e-09	&1.405718711852693e-10

\\
{order}           &\multicolumn{1}{c}{-} &4.153441136230036	&4.040986967984627	&4.012900125715115	&4.004625181859796	&4.002040756575982
\\

\midrule
$\epsilon_0/2^{\frac{8}{5}} $            &0.001197718285284	&0.000067301402426	&0.000004103975752	&0.000000254569466	&0.000000015867545	&0.000000000990621

\\
{order}             &\multicolumn{1}{c}{-} &4.153508234214018	&4.035542456142731	&4.010890915788260	&4.003908573409788	&4.001601342408885
\\

\midrule
$\epsilon_0/2^{\frac{12}{5}}$              &0.015155365245978	&0.000881926912620	&0.000054634326220	&0.000003406643911	&0.000000212749255	&0.000000013292809
 \\
{order}              &\multicolumn{1}{c}{-}   &4.103025710257935	&4.012779528663052	&4.003384608664907	&4.001125165566361	&4.000436140822420
\\

\midrule
$\epsilon_0/2^{\frac{16}{5}} $          &0.224005331960773	&0.013170289295190	&0.000865349370416	&0.000054396679535	&0.000003403700733	&0.000000212806691
 \\
{order}              &\multicolumn{1}{c}{-}   &4.088174131992373	&3.927860511095907	&3.991692219748013	&3.998342489474011	&3.999488774370981
\\

\bottomrule
\end{tabular}

\end{table}

\section{Conclusion}\label{section5}

\noindent
This paper investigates time-splitting methods for the semiclassical Dirac equation. Through rigorous theoretical analysis, we prove that both the first-order and second-order time-splitting schemes preserve the mass conservation property of the Dirac equation. Furthermore, rigorous error estimates and convergence analysis are established for the proposed schemes. Theoretical results show that the proposed time-splitting methods achieve spectral accuracy in space, 
with temporal error bounds of 
$O\left(\tau/\varepsilon^2\right), O\left(\tau^2/\varepsilon^3\right), O\left(\tau^4/\varepsilon^5\right)$
for $S_1$, $S_2$, and $S_{\mathrm{4c}}$, respectively.
Numerical experiments are presented to validate the theoretical results
and demonstrate the effectiveness of the proposed methods.

\section*{Data Availability Statement}
The implementation code and numerical datasets supporting this study have been deposited in a Zenodo repository. Access to the repository has been provided in the submission system.

\begin{acknowledgements}
H. Wang and J. Yin were partially supported by Shanghai Rising-Star Program under Grant No. 24QA2700600 and National Natural Science Foundation of China under Grant No. 12571422. 
\end{acknowledgements}

\bibliographystyle{plain}
\bibliography{references_article_new}

@article{dirac1928quantum,
  author  = {Dirac, P. A. M.},
  title   = {{The quantum theory of the electron}},
  journal = {{Proc. R. Soc. Lond. A}},
  volume  = {117},
  number  = {778},
  pages   = {610--624},
  year    = {1928},
  publisher = {{The Royal Society London}}
}

@article{novoselov2004electric2,
  author  = {Novoselov, K. S. and Geim, A. K. and Morozov, S. V. and Jiang, D.-e. and Zhang, Y. and Dubonos, S. V. and Grigorieva, I. V. and Firsov, A. A.},
  title   = {{Electric field effect in atomically thin carbon films}},
  journal = {{Science}},
  volume  = {306},
  number  = {5696},
  pages   = {666--669},
  year    = {2004},
  publisher = {{American Association for the Advancement of Science}}
}

@article{castro2009electronic3,
  author  = {Castro Neto, A. H. and Guinea, F. and Peres, N. M. R. and Novoselov, K. S. and Geim, A. K.},
  title   = {{The electronic properties of graphene}},
  journal = {{Rev. Mod. Phys.}},
  volume  = {81},
  number  = {1},
  pages   = {109--162},
  year    = {2009},
  publisher = {{APS}}
}

@article{Hasan2010Topological4,
  author  = {Hasan, M. Z. and Kane, C. L.},
  title   = {{Colloquium: topological insulators}},
  journal = {{Rev. Mod. Phys.}},
  volume  = {82},
  number  = {4},
  pages   = {3045--3067},
  year    = {2010},
  publisher = {{APS}}
}

@article{Young2012Dirac5,
  author  = {Young, S. M. and Zaheer, S. and Teo, J. C. Y. and Kane, C. L. and Mele, E. J. and Rappe, A. M.},
  title   = {{Dirac semimetal in three dimensions}},
  journal = {{Phys. Rev. Lett.}},
  volume  = {108},
  number  = {14},
  pages   = {140405},
  year    = {2012},
  publisher = {{APS}}
}

@article{DiPiazza2012Strong6,
  author  = {Di Piazza, A. and M{\"u}ller, C. and Hatsagortsyan, K. Z. and Keitel, C. H.},
  title   = {{Extremely high-intensity laser interactions with fundamental quantum systems}},
  journal = {{Rev. Mod. Phys.}},
  volume  = {84},
  number  = {3},
  pages   = {1177--1228},
  year    = {2012},
  publisher = {{APS}}
}

@article{boada2011dirac7,
  author  = {Boada, O. and Celi, A. and Latorre, J. I. and Lewenstein, M.},
  title   = {{Dirac equation for cold atoms in artificial curved spacetimes}},
  journal = {{New J. Phys.}},
  volume  = {13},
  number  = {3},
  pages   = {035002},
  year    = {2011},
  publisher = {{IOP Publishing}}
}

@article{bao2017numerical,
  author  = {Bao, W. and Cai, Y. and Jia, X. and Tang, Q.},
  title   = {{Numerical methods and comparison for the Dirac equation in the nonrelativistic limit regime}},
  journal = {{J. Sci. Comput.}},
  volume  = {71},
  number  = {3},
  pages   = {1094--1134},
  year    = {2017},
  publisher = {{Springer}}
}

@article{bao2019fourth,
  author  = {Bao, W. and Yin, J.},
  title   = {{A fourth-order compact time-splitting Fourier pseudospectral method for the Dirac equation}},
  journal = {{Res. Math. Sci.}},
  volume  = {6},
  number  = {1},
  pages   = {11},
  year    = {2019},
  publisher = {{Springer}}
}

@article{fdtd_dirac,
  author  = {Ma, Y. and Yin, J.},
  title   = {{Error bounds of the finite difference time domain methods for the Dirac equation in the semiclassical regime}},
  journal = {{J. Sci. Comput.}},
  volume  = {81},
  number  = {3},
  pages   = {1801--1822},
  year    = {2019},
  publisher = {{Springer}}
}

@article{sp_SchrodingerEquation,
  author  = {Bao, W. and Jin, S. and Markowich, P. A.},
  title   = {{On time-splitting spectral approximations for the Schr{\"o}dinger equation in the semiclassical regime}},
  journal = {{J. Comput. Phys.}},
  volume  = {175},
  number  = {2},
  pages   = {487--524},
  year    = {2002},
  publisher = {{Elsevier}}
}

@article{yin_sp_time-dependent,
  author  = {Yin, J.},
  title   = {{A fourth-order compact time-splitting method for the Dirac equation with time-dependent potentials}},
  journal = {{J. Comput. Phys.}},
  volume  = {430},
  pages   = {110109},
  year    = {2021},
  publisher = {{Elsevier}}
}

@article{classical-solution-fdtd1,
  author  = {Antoine, X. and Lorin, E. and Sater, J. and Fillion-Gourdeau, F. and Bandrauk, A. D.},
  title   = {{Absorbing boundary conditions for relativistic quantum mechanics equations}},
  journal = {{J. Comput. Phys.}},
  volume  = {277},
  pages   = {268--304},
  year    = {2014},
  publisher = {{Elsevier}}
}

@article{classical-solution-fdtd2,
  author  = {Braun, J. W. and Su, Q. and Grobe, R.},
  title   = {{Numerical approach to solve the time-dependent Dirac equation}},
  journal = {{Phys. Rev. A}},
  volume  = {59},
  number  = {1},
  pages   = {604},
  year    = {1999},
  publisher = {{APS}}
}

@article{sp-Stokes-equation,
  author  = {Carelli, E. and Hausenblas, E. and Prohl, A.},
  title   = {{Time-splitting methods to solve the stochastic incompressible Stokes equation}},
  journal = {{SIAM J. Numer. Anal.}},
  volume  = {50},
  number  = {6},
  pages   = {2917--2939},
  year    = {2012},
  publisher = {{SIAM}}
}

@article{sp-semilag-SchrodingerEquation,
  author  = {Jin, S. and Zhou, Z.},
  title   = {{A semi-Lagrangian time splitting method for the Schr{\"o}dinger equation with vector potentials}},
  journal = {{Commun. Inf. Syst.}},
  volume  = {13},
  number  = {3},
  pages   = {247--289},
  year    = {2013}
}

@article{sp-semiregime-SchrodingerEquation,
  author  = {Bao, W. and Jin, S. and Markowich, P. A.},
  title   = {{Numerical study of time-splitting spectral discretizations of nonlinear Schr{\"o}dinger equations in the semiclassical regimes}},
  journal = {{SIAM J. Sci. Comput.}},
  volume  = {25},
  number  = {1},
  pages   = {27--64},
  year    = {2003},
  publisher = {{SIAM}}
}

@article{sp-dirac-uniform-error-bounds,
  author  = {Bao, W. and Feng, Y. and Yin, J.},
  title   = {{Improved uniform error bounds on time-splitting methods for the long-time dynamics of the Dirac equation with small potentials}},
  journal = {{Multiscale Model. Simul.}},
  volume  = {20},
  number  = {3},
  pages   = {1040--1062},
  year    = {2022},
  publisher = {{SIAM}}
}

@article{spectral-error,
  author  = {Pasciak, J. E.},
  title   = {{Spectral and pseudospectral methods for advection equations}},
  journal = {{Math. Comp.}},
  volume  = {35},
  number  = {152},
  pages   = {1081--1092},
  year    = {1980}
}

@incollection{hall2015baker,
  author    = {Hall, B.},
  title     = {{The Baker--Campbell--Hausdorff formula and its consequences}},
  booktitle = {{Lie Groups, Lie Algebras, and Representations: An Elementary Introduction}},
  pages     = {109--137},
  year      = {2015},
  publisher = {{Springer}}
}

@article{chin2001fourth,
  author  = {Chin, S. A. and Chen, C.-R.},
  title   = {{Fourth order gradient symplectic integrator methods for solving the time-dependent Schr{\"o}dinger equation}},
  journal = {{J. Chem. Phys.}},
  volume  = {114},
  number  = {17},
  pages   = {7338--7341},
  year    = {2001},
  publisher = {{American Institute of Physics}}
}

@article{strang-splitting-1968,
  author  = {Strang, G.},
  title   = {{On the construction and comparison of difference schemes}},
  journal = {{SIAM J. Numer. Anal.}},
  volume  = {5},
  number  = {3},
  pages   = {506--517},
  year    = {1968},
  publisher = {{SIAM}}
}

@article{bao2016uniformly,
  author = {Bao, W. and Cai, Y. and Jia, X. and Tang, Q.},
  title = {{A Uniformly Accurate Multiscale Time Integrator Pseudospectral Method for the Dirac Equation in the Nonrelativistic Limit Regime}},
  journal = {{SIAM J. Numer. Anal.}},
  volume = {54},
  number = {3},
  pages = {1785--1812},
  year = {2016}
}

\end{document}